\newcommand{\nc}{\newcommand}
 \nc{\aff}{\mathfrak{aff} } \nc{\bb}{\mathfrak{b} }
\nc{\cc}{\mathfrak{c} }  \nc{\dd}{\mathfrak{d} }
 \nc{\ggo}{\mathfrak{g} }
 \nc{\hh}{\mathfrak{h} }  \nc{\ii}{\mathfrak{i} }
 \nc{\jj}{\mathfrak{j} }  \nc{\kk}{\mathfrak{k} }
\nc{\mm}{\mathfrak{m} }   \nc{\nn}{\mathfrak{n} }
\nc{\pp}{\mathfrak{p} }  \nc{\rr}{\mathfrak{r} } \nc{\sg}{\mathfrak{s} }
 \nc{\sog}{\mathfrak{so} }  \nc{\spg}{\mathfrak{sp} }
 \nc{\sug}{\mathfrak{su} }  \nc{\slg}{\mathfrak{sl} }
 \nc{\tg}{\mathfrak{t} }  \nc{\uu}{\mathfrak{u} }
 \nc{\vv}{\mathfrak{v} } \nc{\ww}{\mathfrak{w} }
 \nc{\zz}{\mathfrak{z} }
 \nc{\ggob}{\overline{\mathfrak{g}}}
\nc{\glg}{\mathfrak{gl} }
\nc{\pca}{\mathcal{P}} \nc{\nca}{\mathcal{N}}
 \nc{\vp}{\varphi} \nc{\ddt}{\frac{{\rm d}}{{\rm d}t}}
 \nc{\la}{\langle} \nc{\ra}{\rangle}
 \nc{\SO}{{\sf SO}} \nc{\Spe}{{\sf Sp}} \nc{\Sl}{{\sf Sl}}
 \nc{\SU}{{\sf SU}} \nc{\Or}{{\sf O}} \nc{\U}{{\sf U}}
 \nc{\Gl}{{\sf Gl}} \nc{\Se}{{\sf S}} \nc{\Cl}{{\sf Cl}}
 \nc{\Spin}{{\sf Spin}} \nc{\Pin}{{\sf Pin}}
 \nc{\RR}{{\mathbb R}} \nc{\HH}{{\mathbb H}} \nc{\CC}{{\mathbb C}}
 \nc{\ZZ}{{\mathbb Z}} \nc{\FF}{{\mathbb F}} \nc{\NN}{{\mathbb N}}
 \nc{\GG}{{\mathbb G}} \nc{\JJ}{{\mathbb J}} \nc{\II}{{\mathbb I}}
 \nc{\KK}{{\mathbb K}} \nc{\DD}{{\mathbb D}}
 \nc{\ad}{\operatorname{ad}} \nc{\Ad}{\operatorname{Ad}}
 \nc{\coad}{\operatorname{coad}} \nc{\ct}{\operatorname{T}}
 \nc{\rank}{\operatorname{rank}} \nc{\Irr}{\operatorname{Irr}}
 \nc{\End}{\operatorname{End}} \nc{\Aut}{\operatorname{Aut}}
 \nc{\Inn}{\operatorname{Inn}} \nc{\Der}{\operatorname{Der}}
 \nc{\Dera}{\operatorname{Dera}} \nc{\Auto}{\operatorname{Auto}}
 \nc{\GL}{\operatorname{GL}}
 \nc{\SL}{\operatorname{SL}}
 \theoremstyle{plain}
 \newtheorem{thm}{Theorem}[section]
 \newtheorem{prop}[thm]{Proposition}
 \newtheorem{cor}[thm]{Corollary}
 \newtheorem{lem}[thm]{Lemma}
 \theoremstyle{remark}
 \newtheorem*{remark}{Remark}
 \newtheorem{example}[thm]{Example}
\newcommand{\mg}{\mathfrak n }
\newcommand{\mz}{\mathfrak z }
\newcommand{\mgg}{\mathfrak g }
\begin{document}

\title[Free nilpotent Lie algebras admitting ad-invariant metrics]
{Free nilpotent Lie algebras admitting ad-invariant metrics}

\author{Viviana J. del Barco}
\address{CONICET and ECEN-FCEIA, Universidad Nacional de Rosario, Pellegrini 250, 2000 Rosario, Argentina.
}

\email{V. del Barco: delbarc@fceia.unr.edu.ar}

%    author two information

\author{Gabriela P. Ovando}
%\address{CONICET and ECEN-FCEIA, Universidad Nacional de Rosario,
%Pellegrini 250, 2000 Rosario, Argentina.
%}
%\thanks{G.O. was partially supported by CONICET, ANPCyT, SECyT-UNC, SCyT-UNR}
\email{G. Ovando: gabriela@fceia.unr.edu.ar}

%    \subjclass is required.

\thanks{Partially supported by Secyt-UNC and SCyT-UNR}
\thanks{Keywords: Free nilpotent Lie algebra, free metabelian nilpotent Lie algebra, ad-invariant metrics, automorphisms and derivations.
\thanks{MSC 2000: 17B01 17B40 17B05 17B30 22E25}
}

%\date{\today}

%\thanks{G. O.'s current address: Abteilung f\"ur reine Mathematik, Mathematisches Institut,
%Eckerstr. 1, 79104 Freiburg, Germany.}

\dedicatory{}

%    "Communicated by" -- provide editor's name; required.
\commby{}

%    Abstract is required.

\begin{abstract} In this work we find necessary and sufficient conditions for a free nilpotent
 or a free metabelian nilpotent Lie algebra to be endowed with an ad-invariant metric. For such nilpotent Lie algebras
admitting an ad-invariant metric the corresponding automorphisms groups are studied.
\end{abstract}

\maketitle

\section{Introduction} 

An ad-invariant metric on a Lie algebra $\mgg$ is a nondegenerate  symmetric bilinear form $\la \,,\, \ra$ which satisfies
\begin{equation}\label{adme}
\la [x, y],  z\ra + \la y, [x, z]\ra = 0 \qquad \mbox{ for all }x, y, z \in  \mgg.
\end{equation}

 Lie algebras endowed with  ad-invariant metrics (also called  ``metric'' or ``quadratic'') became relevant some years ago when they were useful in the formulation of some physical problems such as  the known Adler-Kostant-Symes scheme. They also constitute the basis for the construction of  bialgebras and they give rise to interesting pseudo-Riemannian geometry \cite{Co}. For instance in \cite{Ov1} a result originally due to Kostant  \cite{Kos} was revalidated for pseudo-Riemannian metrics: it states that the Lie algebra  of the isometry group of a naturally reductive pseudo-Riemannian space (in particular symmetric spaces) can be endowed with an ad-invariant metric. 

Semisimple Lie algebras are examples of Lie algebras admitting an ad-invariant metric since the Killing form is nondegenerate. In the solvable case, the Killing form is degenerate so one must search for another bilinear form with the ad-invariance property. The first investigations  concerning general Lie algebras with ad-invariant metrics appeared in \cite{FS,MR}. They get structure results proposing  a method to construct these Lie algebras recursively. This enables  a classification of nilpotent Lie algebras admitting ad-invariant metrics of dimension $\leq 7$  in \cite{FS} and a determination of the Lorentzian Lie algebras in \cite{Me}. The point is that by  this recursive method one can reach the same Lie algebra starting from two non-isomorphic Lie algebras. This fact difficulties the classification in higher dimensions.  More recently  a new proposal for the classification problem is presented in \cite{KO} and this is applied in \cite{Ka} to get the nilpotent Lie algebras with ad-invariant metrics of dimension $\leq 10$. 

However the basic question whether a non-semisimple Lie algebra admits such a metric is still opened. In the present paper we deal with this problem  in the family of free nilpotent and free metabelian nilpotent Lie algebras. 

\medskip

{\bf Theorem} \ref{t1}. {\em Let $\mg_{m,k}$ be the free $k$-step nilpotent Lie algebra in $m$ generators. Then $\mg_{m,k}$ admits an ad-invariant metric if and only if $(m,k)=(3,2)$ or $(m,k)=(2,3)$.}

\medskip

The techniques for the proof do not make use of the extension procedures mentioned before, but  properties of free nilpotent Lie algebras which combined with the ad-invariance condition enable the deduction of the Lie algebras  $\mg_{2,3}$ and $\mg_{3,2}$. We note that for $k=2,3$ the free and free metabelian $k$-step nilpotent Lie algebras coincide. 
For the free metabelian case the first approach lies in  the fact that 2-step solvable Lie algebras admitting ad-invariant metrics are nilpotent and at most 3-step (Lemma \ref{le11}). Thus  working out  we get the next result. 

\medskip

{\bf Theorem} \ref{t2}. {\em Let $\tilde{\mg}_{m,k}$ be the free metabelian $k$-step nilpotent Lie algebra in $m$ generators. Then $\tilde{\mg}_{m,k}$ admits an ad-invariant metric if and only if $(m,k)=(3,2)$ or $(m,k)=(2,3)$.}

\medskip

These two Lie algebras  have been studied since a long time in sub-Riemannian geometry \cite{Mo}. 
Thus $\mg_{2,3}$ is associated to the Carnot group distribution (see for instance \cite{BM}), which is related to the ``rolling balls problem'', treated by Cartan in \cite{Ca}. The prolongation, representing -roughly speaking- the maximal possible symmetry of the  distribution, in the case of  $\mg_{2,3}$ is the exceptional Lie algebra $\mgg_2$  \cite{BM}. The Lie algebra  $\mg_{3,2}$ was studied more recently in \cite{My} in the context of the geometric characterization of the so-called Maxwell set, wave fronts and caustics (see also \cite{MA}). 

We complete the work with a study of the group of  automorphisms of the Lie algebras  $\mg_{2,3}$ and $\mg_{3,2}$.  The corresponding structure is described and in particular the subgroup of orthogonal automorphisms is determined. 

Following \cite{FS} and the considerations above all Lie algebras here are over a field $K$ of characteristic 0, nevertheless some results in Section 3 could be still true for fields of a characteristic different from 2. 

\section{Free and free metabelian nilpotent Lie algebras}

Let $\mgg$ denote a  Lie algebra. The so-called central descending and ascending series of  $\mgg$, respectively $\{C^r(\mgg)\}$ and $\{C_r(\mgg)\}$ for all $r\geq 0$, are constituted by the ideals in $\mgg$, which  for non-negative integers $r$,  are given by
$$\begin{array}{rclrcl}
C^0(\mgg)&=&\mgg & C_0(\mgg)&=&0 \\
C^r(\mgg)&=&[\mgg,C^{r-1}(\mgg)] & C_r(\mgg)&=&\{x\in \mgg:[x,\mgg]\in C_{r-1}(\mgg)\}.
\end{array}
$$

Note that $C_1(\mgg)$ is by definition the center of $\mgg$, which will be denoted by $\mz(\mgg)$.

A Lie algebra $\mgg$ is called \emph{k-step nilpotent} if $C^k(\mgg)=\{0\}$ but $C^{k-1}(\mgg)\neq \{0\}$ and clearly $C^{k-1}(\mgg)\subseteq \mz(\mgg)$. 

\begin{example} Heisenberg algebras. Let $X_1, \hdots, X_n, Y_1, \hdots, Y_n$ denote a basis of the $2n$-dimensional real vector space $V$ and let $Z\notin V$. Define
$[X_i, Y_j]=\delta_{ij} Z$ and $[Z, U]=0$ for all $U\in V$. Thus $\hh_n= V \oplus \RR Z$ is  the Heisenberg Lie algebra of dimension $2n+1$, which is  2-step nilpotent.
\end{example}

We shall make use of the notation $\mgg'=[\mgg, \mgg]$ and $\mgg''=[\mgg',\mgg']$. 
A Lie algebra is called {\em 2-step solvable} if its commutator is abelian, that is  $\mgg''=0$.

Let  $\mathfrak f_m$ denotes the free  Lie algebra on $m$ generators, with $m\geq 2$. (Notice that a unique  element spans an abelian Lie algebra). Thus
\begin{itemize}

\item the free metabelian $k$-step nilpotent Lie algebra on $m$ generators is defined as $$\tilde{\mg}_{m,k}:= \mathfrak f_m/(C^{k+1}(\mathfrak f_m)+ \mathfrak f_m''),$$

\item the \emph{free $k$-step nilpotent} Lie algebra on $m$ generators $\mg_{m,k}$ is defined as the quotient algebra 
$$\mg_{m,k}:=\mathfrak f_m/C^{k+1}(\mathfrak f_m).$$ 
\end{itemize}

In particular free metabelian nilpotent of any degree are 2-step solvable.

\begin{remark} \label{23} For $k=2,3$ any $k$-step nilpotent Lie algebra is 2-step solvable, which follows from the Jacobi identity. Thus for the free nilpotent ones  we get $\tilde{\mg}_{m,k}=\mg_{m,k}$ for $k=2,3$.
\end{remark}

Let $\mg_{m,k}$ be a free $k$-step nilpotent Lie algebra and let $\{e_1,\ldots,e_m\}$ be an ordered set of generators. The construction of a \emph{Hall} basis associated to this sets of generators is explained below (see \cite{Ha,GG}). 

Start by defining the \emph{length} $\ell$ of each  generator as $1$. Take the Lie brackets $[e_i, e_j]$ for $i>j$, which by definition satisfy $\ell([e_i,e_j])=2$. Now the elements $e_1, \dots , e_m$, $[e_i, e_j]$, $i>j$ belong to the Hall basis. Define a total order in that set by extending the order of the set of  generators and so that $E>F$ if $\ell(E)>\ell(F)$. They allow the construction of the elements of length $3$ and so on. 

Recursively each element of the Hall basis of $\mg_{m,k}$ is defined as follows. The generators $e_1,\ldots,e_m$ are elements of the basis  of length 1. Assume we have defined basic elements of lengths $1,\ldots, r-1\leq k-1$, with a total order satisfying  $E > F$ if $\ell(E) > \ell(F)$.

If $\ell(E) =s$ and $\ell(F) = t$ and $r = s + t\leq k$, then $[E, F]$ is a basic element of length $r$ if both of the following conditions hold:

\begin{itemize}
\item[(1)] $E$ and $F$ are basis elements and $E>F$, and
\item[(2)] if $\ell(E)>1$ and $E=[G,H]$ is the unique decomposition with $G,H$ basic elements, then $F\geq H$.
\end{itemize}

This gives rise to a natural graduation of $\mg_{m,k}$:
$$\mg_{m,k}=\bigoplus_{s=1}^k \mathfrak p(m,s),$$
where $\mathfrak p(m,s)$ denotes the subspace spanned by the elements of the Hall basis of length $s$. Notice that 
\begin{itemize}
\item $C^r(\mg_{m,k})=\oplus_{s=r+1}^k\mathfrak p(m,s)$,
\item $\mathfrak p(m,k) = \mz(\mg_{m,k})$
\end{itemize}

The first assertion follows from the fact that every  bracket of $r + 1$ elements of $\mg_{m,k}$, is a linear combination of brackets of $r + 1$ elements in
the Hall basis (see proof of Theorem 3.1 in \cite{Ha}). This implies $C^r(\mg_{m,k})\subseteq \oplus_{s=r+1}^k \mathfrak p(m, s)$; the
other inclusion is obvious. In particular, $\mathfrak p(m, k) = C^{k-1}(\mg_{m,k}) \subseteq \mathfrak z(\mg_{m,k})$.  Now let $x\in \mathfrak z(\mg_{m,k})$ and let $e$ be a generator and assume $x\notin C^{k-1}(\mg_{m,k})$. Recall that $\mg_{m,k}$ is homomorphic image of the free Lie algebra $\mathfrak f_m$ so that there exist $X,E\in \mathfrak f_m$ such that $X\to x$ and $E \to e$, being $E$  a generator of $\mathfrak f_m$. Since $[x, e]=0$ then $[X,E]=0$ which says $X$and $E$ are proportional, which is impossible (see for instance Ch. 2 in \cite{Ba}). Thus $\mathfrak p(m, k) =  \mathfrak z(\mg_{m,k})$.

\medskip 

Denote as $d_m(s)$ the dimension of $\mathfrak p(m,s)$. Inductively one gets \cite{Se}
\begin{equation}\label{dim}s\cdot d_m(s)= m^s- \sum_{r|s, r<s} r\cdot d_m(r),\qquad s\geq 1.
\end{equation}

Hence for a fixed $m$, one has  $d_m(1)=m$ and $d_m(2)=m(m-1)/2$. 

\begin{example}
\label{ex:1}Given an ordered set of generators $e_1,\ldots, e_m$ of a free $2$-step nilpotent  Lie algebra $\mg_{m,2}$, a Hall basis is
\begin{equation}\label{ba2}
	\mathcal B=\{e_i,\,[e_j,e_k]:\,i=1,\ldots,m,\, 1\leq  k< j\leq m\}. 
	\end{equation}
	Equation (\ref{dim}) asserts that $\dim\mg_{m,2}=d_m(1)+d_m(2)=m+m(m-1)/2$. %Since $\mg_{m,2}$ is $2$-step nilpotent, its center contains the commutator $C^1(\mg_{m,2})=\mathfrak p(m,2)$. Take $x\in \mathfrak z(\mg_{m,2})$, from (\ref{ba2}) above
%	$$x = \sum_{i=1}^m  x_ie_i + \sum_{m\geq i>j \geq 1} y_{ij} [e_i, e_j].$$ 	
%	Then since $[x, e_1] = 0 $ the equality  $0 = \sum_{i= 1}  x_i[e_i, e_1]$  says  that $ x_i = 0$ for $i = 2, \hdots,m$. By taking the Lie bracket  $[x, e_2]$ it turns out that $x_1 = 0$ and therefore $x = 0$. Hence 
Since $\mathfrak z(\mg_{m,2}) = \mathfrak p(m, 2)$, we have $\dim \mathfrak z(\mg_{m,2})= m(m-1)/2$.
\end{example}

\begin{example}
	 \label{ex:2} For the free $3$-step nilpotent Lie algebra on $m$ generators $\mg_{m,3}$ a Hall basis of a set of generators as before has the form
	\begin{equation} \label{ba3}
	\mathcal B=\{e_i, \,[e_j,e_k],\,[[e_r,e_s],e_t]:\,i=1,\ldots,m,\,1\leq k< j\leq m,\, 1\leq s<r\leq m,\,t\geq s\}. 
	\end{equation}
	
	It holds $\mz(\mg_{m,3})=\mathfrak p(m,3)$, and so 
	%In fact, if $x\in \mathfrak z(\mg_{m,3})$ then in terms of the Hall basis (\ref{ba3})
%$$x=\sum_{i=1}^m x_i e_i+\sum_{m\geq k>j\geq 1} y_{kj}[e_k,e_j]+ \sum_{\stackrel{m\geq i>j\geq 1}{m\geq k\geq j}} u_{ijk} [[e_i, e_j], e_k].$$
%Now $0=[x,e_m]=\sum_{i\neq m}-x_i[e_m,e_i]+ \sum_{k>j}y_{kj}[[e_k,e_j],e_m] $ is expressed as a linear combination of basic elements. Hence $x_i=0$ for $i=1,\ldots,m-1$ and $y_{kj}=0$ for all $1\leq j<k\leq m$. To make $x_m=0$, compute for instance $0=[x,e_1]$. Thus (*) holds. Furthermore, equation (\ref{dim}) says that 
$$\dim\, \mz(\mg_{m,3})= d_m(3)=m(m^2-1)/3.$$
\end{example}

\section{Free and free metabelian nilpotent Lie algebras and ad-invariant metrics}

In this section we determine free nilpotent and free metabelian nilpotent Lie algebras admitting ad-invariant metrics.

%First we find necessary conditions for a free nilpotent Lie algebra to be equipped with an ad-invariant metric. Secondly we  prove that these conditions are sufficient.

\vskip 3pt

Let $\mgg$ denote a Lie algebra equipped with an ad-invariant metric $\la\,,\,\ra$, see (\ref{adme}). If $\mm \subseteq \ggo$ is a subset, then we denote by $\mm^{\perp}$ the linear subspace of $\mgg$ given by
$$\mm^{\perp}=\{x\in \ggo, \la x, v\ra=0 \mbox{ for all } v\in \mm\}.$$
 In particular $\mm$ is called 
\begin{itemize}
\item {\em isotropic} if $\mm \subseteq \mm^{\perp}$, 
\item {\em totally isotropic} if $\mm=\mm^{\perp}$, and 
\item {\em nondegenerate} if and only if $\mm \cap \mm^{\perp}=\{0\}$.
\end{itemize}

The proof of the next result follows easily from  an inductive procedure.

\begin{lem} \label{le1} Let $(\ggo, \la\,,\,\ra)$ denote a Lie algebra equipped with an ad-invariant metric.
\begin{itemize}
\item[(1)] If $\hh$ is an ideal of $\ggo$ then $\hh^{\perp}$ is also an ideal in $\ggo$.
\item[(2)] $C^r(\ggo)^{\perp}=C_r(\ggo)$ for all $r$.
\end{itemize}
\end{lem}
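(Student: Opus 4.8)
The plan is to extract from the ad-invariance condition (\ref{adme}) the single structural fact that drives everything: rewriting (\ref{adme}) as $\la [x,y],z\ra=-\la y,[x,z]\ra$ shows that each operator $\ad_x$ is skew-adjoint with respect to $\la\,,\,\ra$. This lets me convert any perpendicularity statement involving a bracket into a bracket statement, and both parts are then essentially bookkeeping on top of this observation.

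For part (1), I would fix $x\in\ggo$ and $v\in\hh^{\perp}$ and verify directly that $[x,v]\in\hh^{\perp}$. For an arbitrary $h\in\hh$, skew-adjointness gives $\la [x,v],h\ra=-\la v,[x,h]\ra$; since $\hh$ is an ideal we have $[x,h]\in\hh$, and since $v\in\hh^{\perp}$ the right-hand side vanishes. Hence $\la [x,v],h\ra=0$ for every $h\in\hh$, so $[x,v]\in\hh^{\perp}$, which is exactly what it means for $\hh^{\perp}$ to be an ideal.

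For part (2), I would argue by induction on $r$. The base case $r=0$ reads $C^0(\ggo)^{\perp}=\ggo^{\perp}=\{0\}=C_0(\ggo)$, and this is precisely where nondegeneracy of the metric is used. For the inductive step I assume $C^{r-1}(\ggo)^{\perp}=C_{r-1}(\ggo)$ and use $C^r(\ggo)=[\ggo,C^{r-1}(\ggo)]$. Then $x\in C^r(\ggo)^{\perp}$ iff $\la x,[y,w]\ra=0$ for all $y\in\ggo$ and $w\in C^{r-1}(\ggo)$; skew-adjointness rewrites this as $\la [y,x],w\ra=0$ for all such $y,w$, i.e. $[y,x]\in C^{r-1}(\ggo)^{\perp}$ for every $y$. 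By the inductive hypothesis this says $[x,\ggo]\subseteq C_{r-1}(\ggo)$, which by antisymmetry of the bracket is exactly the defining condition for $x\in C_r(\ggo)$. Each step is an equivalence, so $C^r(\ggo)^{\perp}=C_r(\ggo)$.

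I do not expect a serious obstacle here; the proof is short once skew-adjointness of $\ad$ is isolated. The only points demanding care are bookkeeping ones: ensuring the base case genuinely invokes nondegeneracy so that $\ggo^{\perp}=\{0\}$, and correctly matching the two recursive definitions so that the condition ``$[y,x]\in C_{r-1}(\ggo)$ for all $y$'' is recognized as membership in $C_r(\ggo)$. Everything else is a direct application of (\ref{adme}).
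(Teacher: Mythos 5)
Your proof is correct and matches what the paper intends: the paper omits the argument, saying only that it ``follows easily from an inductive procedure,'' and your induction on $r$ via the skew-adjointness of each $\ad_x$ (with nondegeneracy supplying the base case $\ggo^{\perp}=\{0\}$) is exactly that standard procedure. Both parts check out, including the implicit use of bilinearity to pass from perpendicularity to the spanning brackets $[y,w]$ to perpendicularity to all of $C^r(\ggo)$.
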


Thus on any Lie algebra admitting an ad-invariant metric the next equality holds
\begin{equation}\label{e2}
\dim \ggo=\dim C^r(\ggo) + \dim C_r(\ggo).
\end{equation}
For the case $r=1$ one obtains
\begin{equation}\label{e1}
\dim \ggo=\dim \zz(\ggo) + \dim C^1(\ggo).
\end{equation}

\begin{example} Let $\mg$ denote a $2$-step nilpotent Lie algebra equipped with an ad-invariant metric. Assume $\zz(\mg)=C^1(\mg)$, then by (\ref{e1}) the metric is neutral and $\dim \mg = 2 \dim \zz(\mg)$.  As a consequence the Heisenberg Lie algebra $\hh_n$ cannot be equipped with any ad-invariant metric. 

 Examples of nilpotent Lie algebras satisfying the equality (\ref{e2}) above for every $r$ arise by considering  the semidirect product of a nilpotent Lie algebra $\nn$ with its dual space via de coadjoint representation $\nn \ltimes \nn^*$. The natural neutral metric on $\nn \ltimes \nn^*$ is ad-invariant.

Nevertheless, condition (\ref{e1}) (and hence (\ref{e2})) is not sufficient for a 2-step nilpotent Lie algebra to admit an ad-invariant metric as shown for instance in \cite{Ov2}.

\end{example}

\begin{lem} \label{le11} Let $\mgg$ denote a 2-step solvable Lie algebra provided with an ad-invariant metric, then $\mgg$ is nilpotent and at most 3-step.
\end{lem}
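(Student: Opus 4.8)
The plan is to reduce the entire statement to the single identity $C^3(\mgg)=0$: once this is shown, the descending central series terminates at the third step, which yields nilpotency and the bound ``at most $3$-step'' simultaneously, so I never need to treat solvability-versus-nilpotency as a separate issue. The organizing device is the trilinear form $\omega(x,y,z):=\la [x,y],z\ra$ attached to the metric. A one-line check from (\ref{adme}) shows $\omega$ is totally skew-symmetric: antisymmetry in the first two slots is antisymmetry of the bracket, and antisymmetry in the last two slots is precisely (\ref{adme}). This total antisymmetry is what does all the work.

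First I would record that $\mgg'=C^1(\mgg)$, so Lemma \ref{le1}(2) with $r=1$ gives $(\mgg')^{\perp}=C_1(\mgg)=\zz(\mgg)$; nondegeneracy of the metric is what legitimizes this identification. Hence it suffices to prove $C^2(\mgg)=[\mgg,\mgg']\subseteq(\mgg')^{\perp}$, i.e.\ $\la [\mgg,\mgg'],\mgg'\ra=0$, for then $C^2(\mgg)\subseteq\zz(\mgg)$ and therefore $C^3(\mgg)=[\mgg,C^2(\mgg)]\subseteq[\mgg,\zz(\mgg)]=0$. The key computation is then immediate from total antisymmetry together with the hypothesis $\mgg''=0$: for $x\in\mgg$ and $a,b\in\mgg'$, a cyclic permutation of the arguments (an even permutation) gives
\[
\la [x,a],b\ra=\omega(x,a,b)=\omega(a,b,x)=\la [a,b],x\ra=0,
\]
since $[a,b]\in[\mgg',\mgg']=\mgg''=0$. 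This establishes $\la[\mgg,\mgg'],\mgg'\ra=0$ and closes the argument.

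I expect the only real obstacle to be resisting the natural but unproductive route of moving brackets one factor at a time through (\ref{adme}): doing so merely reshuffles $\la C^2,\mgg'\ra$ into the equivalent expressions $\la\mgg',C^2\ra$, $\la C^3,\mgg\ra$, or the inclusion $\mgg'\subseteq C_2$, and the manipulation never terminates. The decisive point is to package ad-invariance as the cyclic symmetry of $\omega$, which allows the abelianness of $\mgg'$ to be fed directly into the $\la[a,b],x\ra$ slot rather than chased around the perpendicular lattice. The remaining steps (total antisymmetry of $\omega$ and the identification $(\mgg')^{\perp}=\zz(\mgg)$ via Lemma \ref{le1}) are routine.
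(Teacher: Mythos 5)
Your proof is correct and is essentially the paper's own argument: your cyclic symmetry $\omega(x,a,b)=\omega(a,b,x)$ is exactly the identity $\la [x,y],u\ra=\la [u,x],y\ra$ that the paper derives from (\ref{adme}), and both proofs then invoke Lemma \ref{le1}(2) to conclude $C^2(\mgg)\subseteq (\mgg')^{\perp}=\zz(\mgg)$ and hence that brackets of length four vanish. The trilinear-form packaging is purely cosmetic (and, if anything, your conclusion $C^3(\mgg)=0$ is stated more carefully than the paper's ``$C^4(\mgg)=0$'', which is an off-by-one under its own indexing of the descending central series).
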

\begin{proof} Let $\la\,,\,\ra$ denote an ad-invariant metric on $\mgg$. Since $\mgg$ is 2-step solvable for all $x,y \in \mgg'$ one has $[x,y]=0$, which is equivalent to
$$\begin{array}{rcll} 
0 & = & \la [x,y], u\ra \qquad & \mbox{ for all } u \in \mgg\\
 & = & \la [u, x], y\ra \qquad & \mbox{ for all } y\in \mgg'
 \end{array}
 $$
 thus $[u,x]\in [\mgg, \mgg]^{\perp}=\zz(\mgg)$, and since $x\in \mgg'$ can be written as $x=[v,w]$, then $[u,[v,w]]\subseteq \zz(\mgg)$ for all $u,v,w\in \mgg$, that is $C^4(\mgg)=0$ and so $\mgg$ is at most 3-step nilpotent.
 \end{proof}

\begin{cor} \label{corofree} Let $\tilde{\mg}_{m,k}$ denote a free metabelian nilpotent Lie algebra admitting an ad-invariant metric, then $k\leq 3$.
\end{cor}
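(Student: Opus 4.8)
The plan is to reduce the statement immediately to Lemma \ref{le11}. First I would note that $\tilde{\mg}_{m,k}$ is 2-step solvable by its very definition: it is a quotient of the free Lie algebra $\mathfrak f_m$ by an ideal containing $\mathfrak f_m''$, so the image of $\mathfrak f_m''$ vanishes, which forces the derived subalgebra of $\tilde{\mg}_{m,k}$ to be abelian, i.e. $(\tilde{\mg}_{m,k})''=0$. This is exactly the observation recorded just before Remark \ref{23}.

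Granting that $\tilde{\mg}_{m,k}$ admits an ad-invariant metric, I would then invoke Lemma \ref{le11}, which asserts that a 2-step solvable Lie algebra carrying an ad-invariant metric is nilpotent of step at most 3. Hence the nilpotency index of $\tilde{\mg}_{m,k}$ is bounded above by 3.

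It remains only to match this abstract bound with the nilpotency class prescribed in the construction. Since $\tilde{\mg}_{m,k}$ is genuinely $k$-step nilpotent for $m\geq 2$ (the relevant quotient of $\mathfrak f_m$ does not collapse below length $k$, so that $C^{k-1}(\tilde{\mg}_{m,k})\neq 0$), its nilpotency index equals $k$, and comparison with the bound gives $k\leq 3$. The argument is thus essentially a one-line consequence of Lemma \ref{le11}; there is no genuine obstacle, the only point deserving care being the bookkeeping check that the defining nilpotency class $k$ really coincides with the intrinsic nilpotency index to which Lemma \ref{le11} applies.
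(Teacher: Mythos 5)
Your proof is correct and matches the paper's (implicit) argument exactly: the paper states the corollary as an immediate consequence of Lemma \ref{le11}, relying on the observation recorded before Remark \ref{23} that free metabelian nilpotent Lie algebras are 2-step solvable. Your extra remark that the defining class $k$ coincides with the intrinsic nilpotency index (since $C^{k-1}(\tilde{\mg}_{m,k})\neq 0$ for $m\geq 2$) is a sound bit of bookkeeping that the paper takes for granted.
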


Remark \ref{23} and the previous result says that a free metabelian nilpotent Lie algebra with an ad-invariant metric if it exists, is free nilpotent. Below we determine  which free nilpotent Lie algebra admits such a metric.
\smallskip

Whenever $\mg_{m,k}$ is free nilpotent we have that $\dim \mg_{m,k}/C^1(\mg_{m,k}) =m$ so that
\begin{equation}
\label{e3}
\dim \mg_{m,k}=m+\dim C^1(\mg).
\end{equation}

Hence Equations (\ref{e1}) and (\ref{e3}) show that if $\mg_{m,k}$ admits an ad-invariant metric then 
\begin{equation}
\label{e4}
\dim \mz(\mg_{m,k})=m.
\end{equation}

\begin{prop}\label{p1} If $\nn_{m,2}$ is a free 2-step nilpotent Lie algebra endowed with an ad-invariant metric, then $m=3$.
\end{prop}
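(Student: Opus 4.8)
The plan is to compare the intrinsic dimension of the center of a free $2$-step nilpotent Lie algebra with the constraint that the existence of an ad-invariant metric places on that dimension; the two can be reconciled only for a single value of $m$.

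First I would recall from Example \ref{ex:1} that for the free $2$-step nilpotent Lie algebra one has $\mz(\mg_{m,2}) = \mathfrak{p}(m,2) = C^1(\mg_{m,2})$, and that by the dimension formula (\ref{dim}) its dimension is $d_m(2) = m(m-1)/2$. This is a purely structural fact about $\mg_{m,2}$ and uses nothing about the metric.

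Next I would invoke (\ref{e4}): since $\mg_{m,2}$ is free nilpotent, the quotient $\mg_{m,2}/C^1(\mg_{m,2})$ is the $m$-dimensional span of the generators, so (\ref{e3}) holds, and combining it with (\ref{e1}) shows that as soon as $\mg_{m,2}$ carries an ad-invariant metric its center must have dimension exactly $m$. Equivalently, one may observe directly that for a free $2$-step algebra the center and the commutator coincide, so that (\ref{e1}) collapses to the single relation $\dim \mg_{m,2} = 2\dim \mz(\mg_{m,2})$.

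Equating the two expressions for $\dim \mz(\mg_{m,2})$ then forces $m(m-1)/2 = m$, whose only solution with $m \geq 2$ is $m = 3$, as claimed. There is no genuine obstacle at this point: all of the work has been front-loaded into Lemma \ref{le1} (which yields (\ref{e1})) and into the Hall-basis dimension count of Example \ref{ex:1}, so the proposition reduces to a one-line dimension comparison. The only point deserving a moment's care is the coincidence $\mz(\mg_{m,2}) = C^1(\mg_{m,2})$, since it is precisely this feature of the $2$-step case that makes the necessary condition (\ref{e4}) so restrictive.
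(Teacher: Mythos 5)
Your proposal is correct and follows essentially the same route as the paper: it combines the Hall-basis count $\dim \mz(\mg_{m,2}) = d_m(2) = m(m-1)/2$ from Example \ref{ex:1} with the necessary condition (\ref{e4}) that $\dim \mz(\mg_{m,k}) = m$, and solves $m(m-1)/2 = m$ to get $m=3$. The extra observation that $\mz(\mg_{m,2}) = C^1(\mg_{m,2})$ makes (\ref{e1}) collapse to $\dim \mg_{m,2} = 2\dim\mz(\mg_{m,2})$ is a pleasant aside but not a departure from the paper's argument.
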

\begin{proof}
 Let $\mg_{m,2}$ be the free 2-step nilpotent Lie algebra on $m$ generators. 
As we showed in  Example \ref{ex:1} its center has dimension $m(m-1)/2$. Now if Equation (\ref{e4}) holds then  $m=3$.
\end{proof}

\begin{prop}\label{p2} Let $\nn_{m,3}$ be a free 3-step nilpotent Lie algebra provided with an ad-invariant metric, then $m=2$.
\end{prop}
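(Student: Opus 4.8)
The plan is to reproduce verbatim the dimension count that established Proposition \ref{p1}, now supplying the dimension of the center appropriate to the free $3$-step nilpotent case. First I would use the standing hypothesis that $\mg_{m,3}$ admits an ad-invariant metric. Combining Equations (\ref{e1}) and (\ref{e3}) this forces constraint (\ref{e4}), that is $\dim \mz(\mg_{m,3}) = m$. This is the only step in which the metric intervenes; the remainder is purely a count of basis elements in the free Lie algebra.

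Next I would feed in the value of the center's dimension. By Example \ref{ex:2} the center of $\mg_{m,3}$ is the top graded component $\mathfrak p(m,3)$, whose dimension is $d_m(3) = m(m^2-1)/3$. Equating this with the value $m$ imposed above gives $m(m^2-1)/3 = m$. Since $m \geq 2$, I may cancel the nonzero factor $m$ to obtain $m^2 - 1 = 3$, whence $m^2 = 4$ and therefore $m = 2$.

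I do not anticipate any real obstacle: the whole argument is the incompatibility between the linear size $\dim \mz(\mg_{m,3}) = m$ demanded by ad-invariance and the cubic size $m(m^2-1)/3$ of the free $3$-step center, a clash resolved only at $m = 2$. The single point I would verify carefully is the formula for $d_m(3)$; applying the recursion (\ref{dim}) at $s = 3$ yields $3\, d_m(3) = m^3 - d_m(1) = m^3 - m$, confirming $d_m(3) = m(m^2-1)/3$ and hence that the equation to be solved is the correct one.
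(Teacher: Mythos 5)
Your proposal is correct and follows exactly the paper's argument: the paper's proof of Proposition \ref{p2} likewise combines the constraint $\dim \mz(\mg_{m,3})=m$ from (\ref{e4}) with the value $d_m(3)=m(m^2-1)/3$ from Example \ref{ex:2} and solves for $m$. You merely make explicit the ``straightforward calculations'' the paper leaves implicit, including the correct verification of $d_m(3)$ via the recursion (\ref{dim}) at $s=3$.
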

\begin{proof}
As shown in Example \ref{ex:2} the center $\mz(\mg_{m,3})$ has dimension $d_m(3)=m(m^2-1)/3$.
From straightforward calculations, if Equation (\ref{e4}) is satisfied then  $m= 2$. 
\end{proof}

\begin{prop}\label{p3} No free $k$-step nilpotent Lie algebra $\mg_{m,k}$ on $m$ generators with $k \geq 4$ can be endowed with an ad-invariant metric.
\end{prop}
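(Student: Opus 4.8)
The plan is to reduce the statement to a purely numerical inequality for the dimensions $d_m(k)=\dim\mathfrak p(m,k)$ and then verify that inequality from the recursion~(\ref{dim}). By Equation~(\ref{e4}), if $\mg_{m,k}$ admitted an ad-invariant metric we would have $\dim\mz(\mg_{m,k})=m$; but we have already identified $\mz(\mg_{m,k})=\mathfrak p(m,k)$, so this forces $d_m(k)=m$. Hence it suffices to prove that $d_m(k)>m$ for all $m\ge 2$ and $k\ge 4$, since this contradicts $d_m(k)=m$ and thereby excludes the existence of such a metric.

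To estimate $d_m(k)$ I would work directly from~(\ref{dim}). Each $d_m(r)$ is a dimension, hence nonnegative, so that recursion gives $r\,d_m(r)\le m^r$, and therefore
$$k\,d_m(k)=m^k-\sum_{r\mid k,\ r<k} r\,d_m(r)\ \ge\ m^k-\sum_{r\mid k,\ r<k} m^r.$$
Every proper divisor of $k$ is at most $k/2$, so the subtracted sum is bounded by the geometric sum $\sum_{r=1}^{\lfloor k/2\rfloor}m^r=(m^{\lfloor k/2\rfloor+1}-m)/(m-1)$, which for $m\ge 2$ and $k\ge 4$ (using $\lfloor k/2\rfloor+1\le k-1$ and $m-1\ge 1$) is strictly less than $m^{k-1}$. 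This yields $k\,d_m(k)>m^{k}-m^{k-1}=m^{k-1}(m-1)$, whence $d_m(k)>m$ as soon as $m^{k-2}(m-1)\ge k$. For $k\ge 5$ the left-hand side is at least $2^{k-2}$, and $2^{k-2}\ge k$ holds for every $k\ge 5$, so the inequality is valid for all $m\ge 2$.

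The one place where these crude estimates become tight is the corner $k=4$ (most acutely $(m,k)=(2,4)$, where $2^{k-2}=k$), and this is the step I expect to need separate care. There I would instead compute $d_m(4)$ exactly from~(\ref{dim}): with $d_m(1)=m$ and $d_m(2)=m(m-1)/2$ one obtains $4\,d_m(4)=m^4-m^2$, i.e. $d_m(4)=m^2(m^2-1)/4$, which exceeds $m$ precisely when $m(m^2-1)>4$, true for all $m\ge 2$. Combining the explicit case $k=4$ with the estimate for $k\ge 5$ shows $d_m(k)>m$ throughout the range, contradicting $d_m(k)=m$ and completing the proof. The main obstacle is thus not the asymptotics but the strictness of the inequality at the boundary: the geometric bound degenerates exactly at $(2,4)$, which is why the direct evaluation of $d_m(4)$ is needed rather than the general estimate alone.
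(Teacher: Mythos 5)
Your proof is correct, and for $k\ge 5$ it takes a genuinely different route from the paper's. Both arguments share the same reduction: by (\ref{e4}) an ad-invariant metric forces $\dim\mz(\mg_{m,k})=m$, and since $\mz(\mg_{m,k})=\mathfrak p(m,k)$ it suffices to show $d_m(k)>m$; your $k=4$ case, the exact evaluation $d_m(4)=m^2(m^2-1)/4$ from (\ref{dim}), is precisely the paper's. For $k\ge 5$, however, the paper never estimates the recursion: instead it constructs an explicit family of central Hall-basis elements, the iterated brackets $[x,e_m]^{(k-4)}$ of the length-four basic elements $x\in\mathcal U$, verifies the two Hall conditions inductively, and counts this linearly independent subset of $C^{k-1}(\mg_{m,k})$ to get a lower bound on $\dim\mz(\mg_{m,k})$ that is cubic in $m$ and uniform in $k$ (incidentally, the paper's count simplifies to $|\mathcal U|=\sum_{j=1}^m(m-j+1)(m-j)=m(m^2-1)/3$ rather than the stated $\tfrac13 m^3+m^2+\tfrac23 m$, but either expression exceeds $m$, so its conclusion is unaffected). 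You instead bound the divisor sum in Witt's recursion by a geometric series, using $r\,d_m(r)\le m^r$ and the fact that proper divisors of $k$ are at most $k/2$, obtaining $d_m(k)>m^{k-1}(m-1)/k$ --- a far stronger, exponentially growing bound, with no Hall-basis combinatorics at all. What the paper's construction buys is explicitness (concrete central elements, and an argument that only needs a lower bound on the center, not the identification $\dim\mz=d_m(k)$); what yours buys is brevity and a sharper estimate. One small remark: your caution at the corner $(m,k)=(2,4)$ is unnecessary, since your inequality $k\,d_m(k)>m^{k-1}(m-1)$ is strict (the geometric sum stays strictly below $m^{k-1}$ even there: $2+4=6<8$), so the borderline equality $m^{k-2}(m-1)=k$ still yields $d_m(k)>m$ and the general estimate in fact covers $k=4$ as well; your separate exact computation is of course also valid, and it is what the paper does.
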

\begin{proof}
\emph{$\bullet$ 4-step nilpotent case:} In this case $\mathfrak p(m,4)=  \mz(\mg_{m,4})$, thus from (\ref{dim}):
$$\dim\mz(\mg_{m,4})\geq d_m(4)=\frac{1}{4}(m^4-d_m(1)-2d_m(2))= \frac{m^2(m^2-1)}{4}.$$ 
 Notice that for $m\geq 2$, one has $m^2(m^2-1)/4>m$. 
\medskip 

\emph{$\bullet$ General case, $k\geq 5$:} Let $\mg_{m,k}$ denote the free $k$-step nilpotent Lie algebra in $m$ generators. The goal here is to show that for every $m$ and $k\geq 5$ the dimension of the center of $\mg_{m,k}$ is greater than $m$. In order to give a lower bound for $\dim\mz(\mg_{m,k})$ we construct elements of length $k$ in a Hall basis $\mathcal B$.

Let $\{e_1,\ldots,e_m\}$ a set of generators of $\mg_{m,k}$ and consider the set
\begin{eqnarray}
\mathcal U &=&\{[[[e_i,e_j],e_k],e_m]: 1\leq j<i\leq m,\,k\geq j\}.\nonumber  %\\
\end{eqnarray}

Any element in $\mathcal U$ is basic and of length 4. Given $x\in \mathcal U$, the bracket $$[x,e_m]^{(s)}:= [[[x,\overbrace{e_m],e_m]\cdots , e_m]}^{s}  \;\;s\geq 1$$
is an element in the Hall basis if $\ell([x,e_m]^{(s)})\leq k$. 

In fact if $s=1$ then 
\begin{itemize}
\item[(1)] both $x=[[[e_i,e_j],e_k],e_m] \in \mathcal U$ and $e_m$ are elements of the Hall basis, and $x >e_m$ because of their length;
\item[(2)] also $x=[G,H]$ with $G=[[e_i,e_j],e_k]$ and $H=e_m$ and we have $e_m\geq H$. 
\end{itemize}

So both conditions of the Hall basis definition are satisfied and hence 
 $[x,e_m]^{(1)}\in \mathcal B$ and it belongs to $ C^4(\mg_{m,k})$.
 
 Inductively suppose  $[x,e_m]^{(s-1)}\in\mathcal B$, then clearly $[[[x,e_m],e_m]\cdots ], e_m]^{(s-1)}>e_m$ and it is possible to write $[x,e_m]^{(s-1)}=[G,H]$ with $H=e_m$. Thus $[x,e_m]^{(s)}\in\mathcal B$.  Notice that $[x,e_m]^{(s)}\in C^{s+3}(\mg_{m,k})$.

We construct the new set 
$$ \widetilde{\mathcal U}:=\{[x,e_m]^{(k-4)}:x\in \mathcal U\}\subseteq C^{k-1}(\mg_{m,k}),$$
which is contained in the center of $\mg_{m,k}$ and  it is linearly independent. Therefore
\begin{equation}\label{u}
\dim \mz(\mg_{m,k}) \geq |\widetilde{\mathcal U}|. 
\end{equation}

Clearly $\widetilde{\mathcal U}$ and $\mathcal U$ have the same cardinal. Also, $|\mathcal U|=\sum_{j=1}^m(m-j+1)(m-j)$ since for every fixed $j=1,\ldots,m$,  the amount of possibilities to choose $k\geq j$ and $i>j$ is $(m-j+1)$ and $(m-j)$  respectively. 

Straightforward computations give $ |\widetilde{\mathcal U}|=1/3\, m^3+m^2+2/3\,m$ which combined with (\ref{u}) proves that for any $m$ and $k\geq 5$ $$\dim \mz(\mg_{m,k})\geq 1/3 \,m^3+m^2+2/3\,m .$$ The right hand side is greater than $m$ for all $m\geq 2$. According to (\ref{e4}), the free $k$-step nilpotent Lie algebra $\mg_{m,k}$ does not admit an ad-invariant metric if $k\geq 5$.
\end{proof}

\begin{thm} \label{t1} Let $\mg_{m,k}$ be the free $k$-step nilpotent Lie algebra in $m$ generators. Then $\mg_{m,k}$ admits an ad-invariant metric if and only if $(m,k)=(3,2)$ or $(m,k)=(2,3)$.
\end{thm}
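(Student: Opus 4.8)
The plan is to read the statement as a biconditional and dispatch its two directions separately. The necessity (``only if'') has in effect already been assembled in the three preceding propositions, so the genuine content is the sufficiency (``if''): one must exhibit an explicit ad-invariant metric on each of $\mg_{3,2}$ and $\mg_{2,3}$.

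For necessity, assume $\mg_{m,k}$ with $m\geq 2$ and $k\geq 2$ admits an ad-invariant metric. Proposition \ref{p3} rules out every $k\geq 4$, leaving $k\in\{2,3\}$. If $k=2$ then Proposition \ref{p1} forces $m=3$, and if $k=3$ then Proposition \ref{p2} forces $m=2$. Hence $(m,k)\in\{(3,2),(2,3)\}$, exactly the claimed list; the abelian case $k=1$ lies outside the intended range and is trivial.

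For sufficiency on $\mg_{3,2}$ I would exploit the grading $\mg_{3,2}=\vv\oplus\mz$ with $\vv=\mathfrak p(3,1)$, $\mz=\mathfrak p(3,2)=\mz(\mg_{3,2})$ and $\dim\vv=\dim\mz=3$, where the bracket induces an isomorphism $\vv\wedge\vv\to\mz$. Feeding three elements of $\vv$ into (\ref{adme}) shows that $T(x,y,z):=\la[x,y],z\ra$ must be a totally alternating trilinear form on $\vv$. On $\vv\cong\RR^3$ such a form is, up to scale, the determinant, so I would make $\vv$ and $\mz$ each isotropic and set $\la[x,y],z\ra=\det(x,y,z)$, i.e.\ pair $\vv$ with $\mz\cong\vv\wedge\vv$ by the volume form. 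This pairing is nondegenerate, hence so is the resulting symmetric form, and its ad-invariance on every triple is precisely the alternation of $T$, since at least one of the three arguments always lands in the isotropic central part $\mz$.

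For sufficiency on $\mg_{2,3}$ I would work directly in the Hall basis $e_1,e_2$, $z=[e_2,e_1]$, $u_1=[z,e_1]$, $u_2=[z,e_2]$, with $u_1,u_2$ a basis of $\mz(\mg_{2,3})$. Respecting the grading, the only potentially nonzero blocks pair $\mathfrak p(2,1)$ with the center $\mathfrak p(2,3)$ and pair $\mathfrak p(2,2)$ with itself; imposing (\ref{adme}) on the finitely many relevant triples pins the entries down up to a scalar $c\neq 0$, say $\la z,z\ra=c$, $\la e_1,u_2\ra=c$, $\la e_2,u_1\ra=-c$, $\la e_i,u_i\ra=0$, and the resulting Gram matrix is nondegenerate. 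The necessity being immediate from Propositions \ref{p1}--\ref{p3}, the only real (and still modest) obstacle is this sufficiency step: guessing the grading-compatible, doubly isotropic pairing and checking nondegeneracy together with (\ref{adme})---made transparent by the observation that ad-invariance forces $\la[x,y],z\ra$ to be alternating, a condition solved for free on $\RR^3$ in the case of $\mg_{3,2}$.
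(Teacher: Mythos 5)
Your proposal is correct, and its skeleton matches the paper's: necessity is exactly the paper's invocation of Propositions \ref{p1}, \ref{p2} and \ref{p3}, and sufficiency is settled by exhibiting explicit metrics. Your $\mg_{2,3}$ construction is in fact literally the paper's computation leading to the matrix (\ref{m23}): under your sign conventions $z=-e_3$, $u_1=e_4$, $u_2=e_5$, your values $\la z,z\ra=c$, $\la e_1,u_2\ra=c$, $\la e_2,u_1\ra=-c$ are precisely the paper's $\la e_3,e_3\ra=\la e_1,e_5\ra=-\la e_2,e_4\ra=\alpha$. The one genuinely different step is your treatment of $\mg_{3,2}$: where the paper derives the constraints from (\ref{adme}) entry by entry in the Hall basis and displays the Gram matrix (\ref{m32}), you observe that ad-invariance makes $T(x,y,z)=\la [x,y],z\ra$ totally alternating on $\vv=\mathfrak p(3,1)$ (antisymmetry in the first two slots is automatic, and $\la [x,y],z\ra=-\la [x,z],y\ra$ supplies the rest), that on a three-dimensional space such a form is a scalar multiple of the determinant, and that the hyperbolic pairing of the isotropic subspaces $\vv$ and $\mz\cong\Lambda^2\vv$ given by $\la [x,y],z\ra=\det(x,y,z)$ is well defined, nondegenerate and ad-invariant (the mixed triples being trivial since a bracket vanishes or both entries land in $\mz$). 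This is exactly the paper's metric (\ref{m32}) with $a_{ij}=0$ and $\alpha=1$, but justified in a coordinate-free way; note you should say $K^3$ rather than $\RR^3$, which costs nothing since $\dim\Lambda^3(K^3)^*=1$ over any field. What your route buys is a conceptual, basis-free existence proof; what the paper's entry-by-entry derivation buys is the full family of ad-invariant metrics on each algebra (the free symmetric block $(a_{ij})$ in (\ref{m32}) and (\ref{m23})), which is what feeds the subsequent normalization to $B_{3,2}$ and $B_{2,3}$ in (\ref{m3223}). Your aside on $k=1$ is consistent with the paper's implicit restriction to $k\geq 2$ (an abelian algebra admits ad-invariant metrics trivially, so $k=1$ must indeed be excluded from the statement's scope).
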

\begin{proof} Propositions (\ref{p1}) (\ref{p2}) and (\ref{p3})  prove that if $\mg_{m,k}$ admits an ad-invariant metric then $(m,k)=(3,2)$ or $(m,k)=(2,3)$. Let us show the resting part of the proof.

The Lie algebra $\mg_{3,2}$ has a basis $\{e_1,e_2,e_3,e_4,e_5,e_6\}$ with non zero brackets 
\begin{equation}\label{b32}
[e_1,e_2]=e_4,\qquad [e_1,e_3]=e_5,\qquad [e_2,e_3]=e_6. 
\end{equation}
Since $C^1(\mg_{3,2})=\zz(\mg_{3,2})$, if the metric $\la \,,\,\ra$ is ad-invariant the center of $\mg_{3,2}$ is totally isotropic, so it must hold
$$\la e_i, e_j\ra=0 \qquad \mbox { for all }i,j=4,5,6.$$

The ad-invariance property says
$$\la e_4, e_1\ra=\la [e_1,e_2], e_1\ra=0$$
and similarly $\la e_4,e_2\ra=0$, therefore $\la e_4,e_3\ra \neq 0$. 
Analogously, $\la e_5, e_2\ra\neq 0$ and $\la e_6, e_1\ra\neq 0$. Moreover
$$\alpha=\la [e_1, e_2], e_3\ra=-\la e_2, e_5, \ra=\la e_1, e_6\ra.$$

Thus  in the ordered basis $\{e_1,e_2,e_3,e_4,e_5,e_6\}$, a matrix  of the form
\begin{equation}\label{m32}
\left(
\begin{matrix}
a_{11} & a_{12} & a_{13} & 0 & 0 & \alpha \\
a_{12} & a_{22} & a_{23} & 0 & -\alpha & 0 \\
a_{13} & a_{23} & a_{33} & \alpha & 0 & 0 \\
 0 & 0  & \alpha & 0 & 0 & 0 \\
 0 & -\alpha &  0 & 0 & 0 & 0 \\
 \alpha & 0 & 0 & 0 & 0 & 0
 \end{matrix}
 \right) \qquad \mbox{ with } a_{ij} \in K, \forall i,j=1,2,3 \mbox{ and } \alpha \neq 0
 \end{equation}
 
corresponds to an ad-invariant metric on $\mg_{3,2}$.
\medskip
 
The Lie algebra $\mg_{2,3}$ has a basis $\{e_1,e_2,e_3,e_4,e_5\}$ with non-zero Lie brackets 
\begin{equation}\label{b23}
[e_1,e_2]=e_3,\qquad [e_1,e_3]=e_4,\qquad [e_2,e_3]=e_5. 
\end{equation}
Let $\la \,,\,\ra$ be an ad-invariant metric on $\mg_{2,3}$. Then $\zz(\mg_{3,2})=C^2(\mg_{2,3})=span\{e_4, e_5\}$ while $C^1(\mg_{2,3})=C^2(\mg_{2,3}) \oplus K e_3$.

The ad-invariance property also says that
$$0=\la e_4, e_3\ra =\la e_4, e_4\ra= \la e_4, e_5\ra=\la e_5, e_3\ra=\la e_5, e_5\ra.$$ 
Moreover
$$\la e_1, e_3\ra= \la e_1, [e_1, e_2]\ra =0 \quad \mbox { and } \quad \la e_2, e_3\ra=\la e_2, [e_1, e_2] \ra=0;$$
$$\la e_1, e_4\ra=\la e_1, [e_1, e_3]\ra=0 \quad \mbox{ and } \quad \la e_2, e_5\ra=\la e_2, [e_2, e_3]\ra=0.$$
Therefore
$$\la e_3, e_3\ra=\la [e_1,e_2], e_3\ra=-\la e_2, e_4\ra= \la e_1, e_5\ra =\alpha \neq 0,$$
which amounts to the following matrix for $\la \,,\,\ra$ in the ordered basis above:
\begin{equation}\label{m23}
\left(
\begin{matrix}
a_{11} & a_{12} & 0 & 0 & \alpha \\
a_{12} & a_{22} & 0 & - \alpha & 0 \\
0 & 0 & \alpha & 0 & 0 \\
0 & -\alpha & 0 & 0 & 0 \\
\alpha & 0 & 0 & 0 & 0 
\end{matrix}
\right)
\qquad \mbox{ with } a_{ij} \in K, \forall i,j=1,2 \mbox{ and } \alpha \neq 0. 
\end{equation}
\end{proof}

 Remark \ref{23},  Corollary \ref{corofree}  and the previous theorem imply the next result. 

\begin{thm}\label{t2} Let $\tilde{\mg}_{m,k}$ be the free metabelian $k$-step nilpotent Lie algebra in $m$ generators. Then $\tilde{\mg}_{m,k}$ admits an ad-invariant metric if and only if $(m,k)=(3,2)$ or $(m,k)=(2,3)$.
\end{thm}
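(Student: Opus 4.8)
The plan is to deduce Theorem \ref{t2} directly from the machinery already assembled, so the work is essentially combinatorial bookkeeping rather than new computation. First I would invoke Corollary \ref{corofree}: if $\tilde{\mg}_{m,k}$ carries an ad-invariant metric, then $k\leq 3$. This immediately confines attention to the cases $k=1,2,3$. The case $k=1$ is an abelian Lie algebra, which trivially fails to be $k$-step nilpotent in the intended free sense (and any abelian Lie algebra admits an ad-invariant metric but is not in the family under consideration for $k\geq 2$), so the substantive cases are $k=2$ and $k=3$.

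The key observation is Remark \ref{23}, which states that for $k=2,3$ the free metabelian $k$-step nilpotent Lie algebra coincides with the genuinely free one: $\tilde{\mg}_{m,k}=\mg_{m,k}$. This is because any $2$- or $3$-step nilpotent Lie algebra is automatically $2$-step solvable, so imposing the metabelian relation $\mathfrak f_m''=0$ is redundant once one has already quotiented by $C^{k+1}(\mathfrak f_m)$ with $k\leq 3$. Having collapsed the free metabelian case onto the free case for $k\in\{2,3\}$, I would then appeal to Theorem \ref{t1}, which has already classified exactly when $\mg_{m,k}$ admits an ad-invariant metric: precisely for $(m,k)=(3,2)$ and $(m,k)=(2,3)$.

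Putting these together gives the equivalence in both directions. For the ``only if'' direction: a metric on $\tilde{\mg}_{m,k}$ forces $k\leq 3$ by Corollary \ref{corofree}; for $k\in\{2,3\}$ we have $\tilde{\mg}_{m,k}=\mg_{m,k}$ by Remark \ref{23}, and Theorem \ref{t1} then forces $(m,k)\in\{(3,2),(2,3)\}$. For the ``if'' direction: when $(m,k)=(3,2)$ or $(2,3)$, Remark \ref{23} identifies $\tilde{\mg}_{m,k}$ with $\mg_{m,k}$, and the explicit ad-invariant metrics \eqref{m32} and \eqref{m23} exhibited in the proof of Theorem \ref{t1} supply the required form.

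I do not anticipate any genuine obstacle here, since all the hard analysis—the dimension count for the center via \eqref{dim}, the explicit construction of a large linearly independent family in the center for $k\geq 5$ in Proposition \ref{p3}, and the bilinear-form computations yielding \eqref{m32} and \eqref{m23}—has already been carried out. The only point demanding a little care is ensuring that the bound $k\leq 3$ from Corollary \ref{corofree} genuinely reduces the metabelian problem to the free problem; the subtlety, fully resolved by Remark \ref{23}, is that for $k\geq 4$ the two families differ, but precisely those cases are excluded by the corollary, so the reduction is clean and the theorem follows as a corollary of the two preceding results rather than by any fresh argument.
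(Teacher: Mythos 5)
Your proposal is correct and follows exactly the paper's own route: the paper derives Theorem \ref{t2} in one line from Remark \ref{23}, Corollary \ref{corofree}, and Theorem \ref{t1}, which is precisely the reduction you carry out (your added remark about the implicitly excluded abelian case $k=1$ is harmless and consistent with the paper's conventions).
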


\begin{remark} The free nilpotent Lie algebras above can be constructed as extensions of abelian
Lie algebras. This is the way in which they appear in \cite{FS}, where Favre and Santharoubarne
obtained the classification of the nilpotent Lie algebras of dimension $\leq  7$ admitting an ad-invariant
metric. According to their results, any of the Lie algebras equipped with an
ad-invariant metric $\mg_{3,2}$ or $\mg_{2,3}$ as above is equivalent to one of the followings

\begin{equation}\label{m3223}
(\mg_{3,2}, B_{3,2}) : \left(
\begin{matrix}
0 & 0 & 0 & 0 & 0 & 1 \\
0 & 0 & 0 & 0 & -1 & 0 \\
0 & 0 & 0 & 1 & 0 & 0 \\
 0 & 0  & 1 & 0 & 0 & 0 \\
 0 & -1 &  0 & 0 & 0 & 0 \\
 1 & 0 & 0 & 0 & 0 & 0
 \end{matrix}
 \right) \;\;
 (\mg_{2,3}, B_{2,3}) : 
 \left(
\begin{matrix}
0 & 0 & 0 & 0 & 1 \\
0 & 0 & 0 & - 1 & 0 \\
0 & 0 & 1 & 0 & 0 \\
0 & -1 & 0 & 0 & 0 \\
1 & 0 & 0 & 0 & 0 
\end{matrix}
\right).
\end{equation}

\end{remark}

\section{The automorphism groups of $\mg_{3,2}$ and $\mg_{2,3}$}
Here we study the automorfisms of the Lie algebras in Theorem \ref{t1}. This is indeed  a topic of active research  (see for instance \cite{DF,DG} and references therein). Our goal is to  write explicitly the  algebraic structure, in terms of the actions and representations of the different subgroups or subalgebras. We also distinguish the subgroup of orthogonal automorphism (resp. the Lie algebra of skew-symmetric derivations) in presence of the ad-invariant metric fixed in (\ref{m3223}).

\medskip

Recall that a derivation of a Lie algebra $\mathfrak g$ is a linear map $t: \mathfrak g \to \mathfrak g$ satisfying
$$t[x,y]=[tx, y] + [x, ty ] \qquad \mbox{ for all } x,y \in \mathfrak g.$$

Whenever $\mathfrak g$ is endowed with a metric $\la \,,\,\ra$ a skew-symmetric derivation of $\mgg$ is a derivation  $t$ such that 
\begin{equation}	\label{antisim}
\la t a, b\ra=-\la a, tb\ra \quad \text{ for all }\; a,b\in\mgg.
\end{equation}

We denote by $\Der(\mgg)$  the Lie algebra of derivations of $\mgg$, which is the Lie algebra of the group of automorphisms of $\ggo$, $\Aut(\mgg)$. 
Let $\Dera(\mgg)$ denote the subalgebra of  $\Der(\mgg)$ consisting of skew-symmetric derivations of $(\mgg, \la\,,\,\ra)$.  Thus $\Dera(\mgg)$ is the Lie algebra of the group of orthogonal automorphisms denoted  by $\Auto(\mgg)$:
$$\Auto(\mgg)=\{ \alpha \in \Aut(\mgg) : \la \alpha x, \alpha y \ra=\la x,y \ra \quad \forall x,y \in \mgg\}.
$$

For an arbitrary Lie algebra $\mgg$, each $t \in \Aut(\mgg)$ leaves invariant both the commutator ideal $C^1(\mgg)$ and the center $\zz(\mgg)$. Thus $t$ induces automorphisms of  the quotient Lie algebras $\mgg/\zz(\mgg)$ and $\mgg/C^1(\mgg)$.

In particular if $\mgg$ is solvable, $\mgg/C^1(\mgg)$ is a nontrivial abelian Lie algebra. Hence any  $t\in\Aut(\mgg)$ induces an element $s \in \Aut(\mgg/C^1(\mgg))$ which as an automorphism of an abelian Lie algebra, $s \in \GL(p,K)$ where $p = \dim( \mgg/C^1(\mgg))$. Note that if $\mgg$ is free nilpotent, $p$ coincides with the number of generators of $\mgg$.

Below we proceed to the study in each case.

\subsection{The Lie algebra $\mg_{2,3}$} 

Let $t$ denote an automorphism of $\mg_{2,3}$
and let $e_1,e_2,e_3,e_4,e_5$ be the basis given in (\ref{b23}). Denote as $(t_{ij})$ the matrix of $t$ in that basis.
 Since $t$ leaves invariant the commutator and the center and $C^1(\mg_{2,3})=span\,\{e_3,e_4,e_5\}$ and $\mz(\mg_{2,3})=span\,\{e_4,e_5\}$, we have that $t_{ij}=0$ if $i=1,2$,  $j=3,4,5$.

Notice that the Lie algebra $\mg_{2,3}/\zz(\mg_{2,3})$ is isomorphic to the Heisenberg Lie algebra $\hh_1$, hence $t$ induces a automorphism $\bar{t}\in \Aut(\hh_1)$. Let $\bar{x}$ denote the image of an arbitrary element $x\in \mg_{2,3}$ by the canonical epimorphism $\mg_{2,3} \to \mg_{2,3}/\zz(\mg_{2,3})$, thus $\bar{t}(\bar{x})=\overline{tx}$.
From the computation 
$[\bar{t} \bar{e}_1, \bar{t}\bar{e}_2]=\overline{te_3}$ one gets
\begin{equation}\label{det}
t_{11} t_{22}- t_{12} t_{21}= t_{33}.
\end{equation}

We introduce the notation for the submatrices 
$$A:=\left(\begin{array}{cc}t_{11}&t_{12}\\ t_{21}& t_{22}\end{array}\right)\qquad B:=\left(\begin{array}{cc}t_{44}&t_{45}\\ t_{54}& t_{55}\end{array}\right)
.$$
 Note that the second matrix is non-singular since $t$ is non-singular.

By compairing (\ref{det}) with the computations 
$[t e_i, t e_j]= t[e_i,e_j]$ we get  the conditions
$$t_{33}=\det A,\qquad B=\det(A) A, \qquad 
\left(\begin{array}{c}t_{43}\\ t_{53}\end{array}\right)=A \left(\begin{array}{c}t_{32}\\ -t_{31}\end{array}\right).$$

So for any $t\in \Aut(\mg_{2,3})$, $(t_{ij})_{i,j}$  has the form
\begin{equation}\label{t}
\left(\begin{array}{ccc}
A &                                \begin{array}{cc}0\\0\end{array} & \begin{array}{cc}0&0\\0&0\end{array} \\
\begin{array}{cc} t_{31}&t_{32}\end{array} &      \det(A)                               & \begin{array}{cc}0&0\end{array} \\
\begin{array}{cc}t_{41}&t_{42}\\t_{51}&t_{52}\end{array} &\begin{array}{cc}t_{11}t_{32}-t_{31}t_{12}\\t_{21}t_{32}-t_{31}t_{22}\end{array} & \det(A) A\end{array}
\right), 
\end{equation}
where $A\in \GL(2,K)$.

Consider the following matrices
  in $\Aut(\mg_{2,3})$:
$$\mathcal G=\left\{ \tilde{A}=\left(\begin{array}{ccc}
A &                                \begin{array}{cc}0\\0\end{array} & \begin{array}{cc}0&0\\0&0\end{array} \\
\begin{array}{cc} 0&0\end{array} &        \det(A)                                & \begin{array}{cc}0&0\end{array} \\
\begin{array}{cc}0&0\\0&0\end{array} &\begin{array}{cc}0\\0\end{array} &\det(A)A\end{array}
\right), \, A\in \GL(2,K)\right\}$$
$$
\mathcal H=\left\{ h_{(x,y,z)}=\left(
\begin{array}{ccccc}
 1&0&0&0&0 \\
0&1&0&0&0\\
y&x&1&0&0\\ 
z+\frac{1}{2} x y&\frac{1}{2} x^2&x&1&0\\
-\frac{1}{2} y^2&z-\frac{1}{2} xy&-y&0&1\end{array}\right),\, (x,y,z)\in K^3\right\}
$$

$$\mathcal R= \left\{ r_{(u,v,w)}=\left(
\begin{array}{ccccc}
 1&0&0&0&0 \\
0&1&0&0&0\\
0&0&1&0&0\\ 
v&w&0&1&0\\
u&-v&0&0&1\end{array}\right),\, (u,v,w)\in K^3\right\}.$$

Note that $\mathcal{G}$, $\mathcal{R}$ and  $\mathcal{H}$ are subgroups of $\Aut(\mg_{2,3})$. 

Moreover every $t$ of the form  (\ref{t}) can be written as a product of matrices $$t=\tilde{A}\cdot r_{(u,v,w)} \cdot h_{(x,y,z)},\qquad \mbox{ with } \tilde{A} \in \mathcal G,\; r_{(u,v,w)}\in \mathcal R,\; h_{(x,y,z)}\in \mathcal H$$

 where $x=t_{32}/\det(A)$, $y=t_{31}/\det(A)$, $z=(t_{22}t_{41}-t_{12}t_{51}+t_{52}t_{11}-t_{42}t_{21})/2\det(A)^2$ and
 
  $u=(2t_{51}t_{11}-2t_{41}t_{21}+t_{31}^2)/2 \det A^2$, $v=(t_{22}t_{41}-t_{31}t_{32}-t_{12}t_{51}+t_{42}t_{21}-t_{52}t_{11})/2 \det A$,  $w=(2t_{42}t_{22}-t_{32}^2-2t_{52}t_{12})/2\det A^2$.

 The elements of $\mathcal{H}$ commute with those of $\mathcal{R}$; also $\mathcal H \cap  \mathcal R=\{I\}$. 
 Hence $\mathcal{R}\cdot \mathcal{H}\simeq \mathcal{R}\times \mathcal{H}.$ 
 
 It holds $h_{(x,y, z)} \cdot h_{(x',y',z')}=h_{(x+x', y+ y', z + z' + \frac12 (xy' -x' y))}$, where $\cdot$ denotes the product of matrices. Thus the map from the Heisenberg Lie group $H_1$ to $\Aut(\mg_{2,3})$ given by $(x,y, z) \mapsto h_{(x,y,z)}$ is an isomorphism of groups.

Analogously $(u,v,w)\mapsto r_{(u,v,w)}$ is an isomorphism of groups from $K^3$ to $ \mathcal R\subseteq \Aut(\mg_{2,3})$.

The action by conjugation of $\mathcal{G}$ preserves both $\mathcal{H}$ and $\mathcal{R}$. Thus  the map 
$$\tau_{\tilde{A}}(r,h)=(\tilde{A} r\tilde{A}^{-1},\tilde{A} h\tilde{A}^{-1}).$$  
defines a group homomorphism from $\mathcal{G}$ to $\Aut(\mathcal R \times \mathcal{H})$. 

The subgroup $\mathcal{R}\times \mathcal{H}$ is normal in $\Aut(\mg_{2,3})$ and $\mathcal{G}\cap (\mathcal{R}\times \mathcal{H})=\{I\}$, hence $\Aut(\mg_{2,3})\simeq \mathcal G\ltimes_\tau (\mathcal{R}\times \mathcal{H})$ (\cite{Kn}).
It is clear that $\mathcal{G}\simeq \GL(2,K)$, $\mathcal{R}\simeq K^3$ and $\mathcal{H}\simeq H_1$ and these isomorphisms preserve the action of $\GL(2,K)$ in $H_1$ and $K^3$. So the next result follows.

\begin{prop}\label{pro5} The group of automorphisms of $\mg_{2,3}$ is 
$$\Aut(\mg_{2,3}) \simeq \GL(2,K)\ltimes (K^3 \times H_1),$$
 where $H_1$ denotes the Heisenberg Lie group of dimension three.
\end{prop}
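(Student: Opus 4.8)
The plan is to realize $\Aut(\mg_{2,3})$ concretely as a matrix group and then decompose it into the three explicit subgroups $\mathcal{G}$, $\mathcal{R}$, $\mathcal{H}$ introduced above. First I would pin down the shape of an arbitrary automorphism $t=(t_{ij})$ in the basis of (\ref{b23}). Since $t$ preserves both the commutator $C^1(\mg_{2,3})=\mathrm{span}\{e_3,e_4,e_5\}$ and the center $\zz(\mg_{2,3})=\mathrm{span}\{e_4,e_5\}$, the $e_1,e_2$ rows must vanish on the columns $3,4,5$. Imposing the three bracket identities $[te_i,te_j]=t[e_i,e_j]$ coming from (\ref{b23})---together with the relation (\ref{det}) read off from the automorphism induced on $\mg_{2,3}/\zz\cong\hh_1$---then determines all the remaining entries and produces exactly the normal form (\ref{t}), parametrized by $A\in\GL(2,K)$ together with six free scalars $t_{31},t_{32},t_{41},t_{42},t_{51},t_{52}$.

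Next I would verify that $\mathcal{G}$, $\mathcal{R}$, $\mathcal{H}$ are genuinely subgroups and compute their internal structure: $\mathcal{R}$ is abelian with $(u,v,w)\mapsto r_{(u,v,w)}$ an isomorphism $K^3\to\mathcal{R}$, whereas multiplying two elements of $\mathcal{H}$ reproduces the Heisenberg law $h_{(x,y,z)}h_{(x',y',z')}=h_{(x+x',y+y',z+z'+\frac12(xy'-x'y))}$, so $(x,y,z)\mapsto h_{(x,y,z)}$ is an isomorphism $H_1\to\mathcal{H}$. Since the $\mathcal{H}$ and $\mathcal{R}$ blocks commute entrywise and meet only in $I$, their product is the internal direct product $\mathcal{R}\times\mathcal{H}$. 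The computational heart of the argument is the factorization $t=\tilde{A}\cdot r_{(u,v,w)}\cdot h_{(x,y,z)}$ valid for every $t$ of the form (\ref{t}); here one solves explicitly for $x,y,z,u,v,w$ in terms of the $t_{ij}$ (the formulas displayed above), which yields $\Aut(\mg_{2,3})=\mathcal{G}\cdot(\mathcal{R}\times\mathcal{H})$.

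To assemble the semidirect product I would check that $\mathcal{R}\times\mathcal{H}$ is normal in $\Aut(\mg_{2,3})$ and that $\mathcal{G}\cap(\mathcal{R}\times\mathcal{H})=\{I\}$; combined with the factorization, the standard recognition criterion then gives $\Aut(\mg_{2,3})\simeq\mathcal{G}\ltimes_\tau(\mathcal{R}\times\mathcal{H})$, with $\tau_{\tilde{A}}(r,h)=(\tilde{A}r\tilde{A}^{-1},\tilde{A}h\tilde{A}^{-1})$. Transporting the identifications $\mathcal{G}\simeq\GL(2,K)$, $\mathcal{R}\simeq K^3$, $\mathcal{H}\simeq H_1$ through $\tau$---and observing that conjugation by $\tilde{A}$ induces on $K^3$ and on $H_1$ precisely the natural $\GL(2,K)$-actions---finally yields $\Aut(\mg_{2,3})\simeq\GL(2,K)\ltimes(K^3\times H_1)$.

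I expect the main obstacle to be the explicit factorization together with the verification that the conjugation action of $\mathcal{G}$ is exactly the standard linear one. Checking normality of $\mathcal{R}\times\mathcal{H}$ and that $\tau$ is compatible with the abstract $\GL(2,K)$-actions on $K^3$ and on $H_1$ requires conjugating the parametrized matrices and matching coordinates, and this is where the bookkeeping is heaviest; the rest of the argument is then a formal application of the semidirect-product recognition lemma.
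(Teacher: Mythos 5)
Your proposal is correct and follows essentially the same route as the paper: the same normal form (\ref{t}) derived from invariance of $C^1$ and the center plus the bracket identities, the same three subgroups $\mathcal{G}$, $\mathcal{R}$, $\mathcal{H}$ with the explicit factorization $t=\tilde{A}\cdot r_{(u,v,w)}\cdot h_{(x,y,z)}$, and the same semidirect-product recognition via normality of $\mathcal{R}\times\mathcal{H}$ and trivial intersection with $\mathcal{G}$. The only point the paper handles that you flag merely as expected bookkeeping is the explicit solution of $x,y,z,u,v,w$ in terms of the $t_{ij}$, which the paper records in closed form.
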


The aditional orthogonal condition leads the following result. 
\begin{prop} The group of orthogonal automorphisms of $(\mg_{2,3},B_{2,3})$ is 
$$\Auto(\mg_{2,3}) \simeq \mathcal S \ltimes H_1,$$
 where $\mathcal S$ is the subgroup of $\mathcal G$ consisting of the matrices $\tilde{A}$ with $A\in GL(2,K),\,\det(A)=\pm1$. The action in the semidirect product is the restriction of the action of $\mathcal G$ in the Heisenberg Lie group  $H_1$ described before.
\end{prop}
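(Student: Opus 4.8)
The plan is to leverage the decomposition $\Aut(\mg_{2,3}) \simeq \mathcal{G} \ltimes (\mathcal{R} \times \mathcal{H})$ already obtained, together with the explicit generators $\tilde{A}$, $r_{(u,v,w)}$, $h_{(x,y,z)}$, and to single out $\Auto(\mg_{2,3})$ as the set of those products that preserve the form $B_{2,3}$ of (\ref{m3223}). Orthogonality of $t$ is equivalent to $\la te_i, te_j\ra = \la e_i, e_j\ra$ for all indices $i,j$, and a product of metric-preserving maps is again metric-preserving; so I would first test each of the three generating families separately and then assemble the answer.

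First I would show $\mathcal{H} \subseteq \Auto(\mg_{2,3})$ by a direct check. Reading the images $h_{(x,y,z)}e_i$ off the columns of the defining matrix, and using that $B_{2,3}$ pairs $e_1$ with $e_5$ and $e_2$ with $e_4$ (with opposite signs) and $e_3$ with itself, one verifies $\la h e_i, h e_j\ra = \la e_i, e_j\ra$ in every case; the quadratic terms in $x,y,z$ cancel precisely because of the entries $\tfrac12 x^2$, $-\tfrac12 y^2$ and $\pm\tfrac12 xy$. This shows $\mathcal{H}$ lies entirely in the orthogonal group, so $\Auto(\mg_{2,3})$ is a union of right cosets of $\mathcal{H}$.

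Next I would impose orthogonality on a general element of $\mathcal{G}\ltimes\mathcal{R}$, namely $g = \tilde{A}\, r_{(u,v,w)}$ with $A = \left(\begin{smallmatrix} a & b \\ c & d\end{smallmatrix}\right)$ and $\Delta = \det A$. The images $ge_i$ are immediate to write down, since $r_{(u,v,w)}$ fixes $e_3,e_4,e_5$ while $\tilde{A}$ acts by $A$ on $\{e_1,e_2\}$, by $\Delta$ on $e_3$, and by $\Delta A$ on $\{e_4,e_5\}$. From $\la g e_3, g e_3\ra = \Delta^2$ (equivalently $\la g e_1, g e_5\ra = \Delta^2$) the requirement $\la g e_3, g e_3\ra = 1$ forces $\Delta^2 = 1$, that is $\det A = \pm 1$. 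Substituting $\Delta^2 = 1$ and using $ad-bc = \Delta$, the remaining conditions $\la g e_1, g e_1\ra = 0$, $\la g e_1, g e_2\ra = 0$ and $\la g e_2, g e_2\ra = 0$ collapse to $2u = 0$, $-2v = 0$ and $-2w = 0$. Hence $g$ is orthogonal if and only if $r_{(u,v,w)} = I$ and $\tilde{A} \in \mathcal{S}$; in other words $\Auto(\mg_{2,3}) \cap (\mathcal{G}\ltimes\mathcal{R}) = \mathcal{S}$.

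Finally I would assemble. Writing an arbitrary $t \in \Auto(\mg_{2,3})$ as $t = \tilde{A}\, r_{(u,v,w)}\, h_{(x,y,z)}$, the element $t\, h_{(x,y,z)}^{-1} = \tilde{A}\, r_{(u,v,w)}$ lies in $\Auto \cap (\mathcal{G}\ltimes\mathcal{R}) = \mathcal{S}$ because $\mathcal{H}\subseteq\Auto$; thus $r_{(u,v,w)} = I$, $\tilde{A}\in\mathcal{S}$ and $t \in \mathcal{S}\,\mathcal{H}$, while the reverse inclusion $\mathcal{S}\,\mathcal{H} \subseteq \Auto$ is immediate from the previous steps. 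Since $\mathcal{S}\subseteq\mathcal{G}$ normalizes $\mathcal{H}$ (the conjugation action of $\mathcal{G}$ preserves $\mathcal{H}$) and $\mathcal{S}\cap\mathcal{H} = \{I\}$, one concludes $\Auto(\mg_{2,3}) = \mathcal{S}\ltimes\mathcal{H} \simeq \mathcal{S}\ltimes H_1$ with the stated action. The main obstacle is organizational rather than conceptual: one must track the mixed $e_4,e_5$-components produced when $\tilde{A}$ acts after $r_{(u,v,w)}$ and carry out the cancellations that turn the four quadratic orthogonality equations into the clean conditions $\Delta^2 = 1$ and $u = v = w = 0$, where the repeated use of $\det A = \pm 1$ is exactly what makes the simplifications collapse.
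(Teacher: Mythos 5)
Your proposal is correct and takes essentially the approach the paper intends: the paper states this proposition without a written proof, presenting it as the direct consequence of imposing ``the additional orthogonal condition'' on the decomposition $\Aut(\mg_{2,3})\simeq\mathcal G\ltimes(\mathcal R\times\mathcal H)$, which is exactly what you carry out. Your computations check out --- $\mathcal H\subseteq\Auto(\mg_{2,3})$, and orthogonality of $\tilde A\, r_{(u,v,w)}$ forces $\det(A)^2=1$ and $u=v=w=0$ --- so the assembly $\Auto(\mg_{2,3})=\mathcal S\,\mathcal H\simeq\mathcal S\ltimes H_1$ with the restricted $\mathcal G$-action is valid.
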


\begin{cor} The Lie algebra of derivations of $\mg_{2,3}$ is isomorphic to 
$$\mathfrak{gl}(2,K) \ltimes (\mathfrak h_1 \times K^3).$$
In particular, fix the ad-invariant metric $\la \,,\,\ra$ in $(\mg_{2,3}, B_{2,3})$ given by the matricial representation as in (\ref{m3223}). 	

The set of skew-symmetric derivations is represented by the Lie algebra
$$\Dera(\mg_{2,3}) \simeq \mathfrak{sl}(2,K) \ltimes \mathfrak h_1$$
while the set of inner derivations is (isomorphic to) $\hh_1$.
\end{cor}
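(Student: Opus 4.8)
The plan is to obtain all three isomorphisms by passing from the automorphism groups computed above to their Lie algebras, since by definition $\Der(\mg_{2,3})$ is the Lie algebra of $\Aut(\mg_{2,3})$ and $\Dera(\mg_{2,3})$ is the Lie algebra of $\Auto(\mg_{2,3})$. Applying the Lie functor to the isomorphism $\Aut(\mg_{2,3})\simeq\GL(2,K)\ltimes(K^3\times H_1)$ of Proposition \ref{pro5}, and using that it sends $\GL(2,K)\mapsto\mathfrak{gl}(2,K)$, the additive group $K^3\mapsto K^3$ (abelian), and the Heisenberg group $H_1\mapsto\mathfrak h_1$, while turning a semidirect product of groups into the semidirect product of the corresponding Lie algebras with the differentiated action, yields $\Der(\mg_{2,3})\simeq\mathfrak{gl}(2,K)\ltimes(\mathfrak h_1\times K^3)$. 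Likewise, from the preceding proposition $\Auto(\mg_{2,3})\simeq\mathcal S\ltimes H_1$ one reads off $\Dera(\mg_{2,3})\simeq\mathfrak{sl}(2,K)\ltimes\mathfrak h_1$, because the Lie algebra of $\mathcal S=\{\tilde A:\det A=\pm1\}$ is the algebra of trace-zero matrices $\mathfrak{sl}(2,K)$.

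Because $K$ is only assumed to be a field of characteristic $0$, I would make this passage rigorous by a direct infinitesimal computation rather than by literal differentiation. A derivation $t$ preserves $C^1(\mg_{2,3})$ and $\zz(\mg_{2,3})$, so in the basis (\ref{b23}) its matrix has the same block shape as (\ref{t}); imposing the derivation identity $t[e_i,e_j]=[te_i,e_j]+[e_i,te_j]$ on the generators $e_1,e_2,e_3$ determines every entry from the free data $A=\left(\begin{smallmatrix}t_{11}&t_{12}\\ t_{21}&t_{22}\end{smallmatrix}\right)$, the pair $(t_{31},t_{32})$ and the quadruple $(t_{41},t_{42},t_{51},t_{52})$, with the determinant in (\ref{t}) replaced by the trace (so $t_{33}=\mathrm{tr}\,A$, and so on). This gives a $10$-dimensional Lie algebra, and $4+3+3=10$ already matches $\dim\bigl(\mathfrak{gl}(2,K)\ltimes(\mathfrak h_1\times K^3)\bigr)$. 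The tangent directions of the one-parameter subgroups $\mathcal G$, $\mathcal H$ and $\mathcal R$ identify the block $A$ with $\mathfrak{gl}(2,K)$, the directions tangent to $\mathcal H$ with $\mathfrak h_1$, and those tangent to $\mathcal R$ with the abelian $K^3$; the only bracket that must be checked by hand is $[X,Y]=Z$ for the generators $X,Y$ of the $\mathfrak h_1$ factor, which is a single matrix computation.

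For the skew-symmetric derivations I would then impose the condition $B_{2,3}\,t+t^{\top}B_{2,3}=0$, equivalent to (\ref{antisim}), on the derivation form just obtained, with $B_{2,3}$ as in (\ref{m3223}). Matching entries, this forces $\mathrm{tr}\,A=0$, i.e. $A\in\mathfrak{sl}(2,K)$, and annihilates the three directions spanning the abelian factor $K^3$, while leaving the $\mathfrak h_1$ factor intact; the surviving $3+3=6$ parameters give exactly $\Dera(\mg_{2,3})\simeq\mathfrak{sl}(2,K)\ltimes\mathfrak h_1$. Finally, the inner derivations are $\Inn(\mg_{2,3})=\{\ad_x:x\in\mg_{2,3}\}\simeq\mg_{2,3}/\zz(\mg_{2,3})$, and since $\zz(\mg_{2,3})=\mathrm{span}\{e_4,e_5\}$ the quotient is spanned by the classes of $e_1,e_2,e_3$ with the single nonzero bracket $[\bar e_1,\bar e_2]=\bar e_3$, that is $\hh_1$; moreover each $\ad_x$ is skew-symmetric by the very ad-invariance (\ref{adme}), so $\Inn(\mg_{2,3})$ is precisely the $\mathfrak h_1$ ideal of $\Dera(\mg_{2,3})$.

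The main obstacle, and the only place where genuine work is needed, is the skew-symmetry bookkeeping: verifying carefully that the relation $B_{2,3}\,t=-t^{\top}B_{2,3}$ cuts $\mathfrak{gl}(2,K)$ down to $\mathfrak{sl}(2,K)$ and kills the abelian $K^3$ factor while preserving the Heisenberg factor. Everything else is either a direct consequence of the group-level statements via the Lie functor or a short finite linear-algebra check.
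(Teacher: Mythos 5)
Your proposal is correct and follows essentially the same route as the paper: the paper likewise obtains the corollary by exhibiting the general matrix of a derivation of $\mg_{2,3}$ in the basis (\ref{b23}), identifying the spans $\{X,Y,Z\}\simeq\hh_1$, $\{U,V,W\}\simeq K^3$ and $\{E,F,H,T\}\simeq\mathfrak{gl}(2,K)$ with their actions, and then cutting down by the condition (\ref{antisim}) relative to $B_{2,3}$, which indeed forces $\operatorname{tr}A=0$ and kills exactly the $K^3$ directions while leaving the Heisenberg factor (the inner derivations $\ad_{e_1},\ad_{e_2},\ad_{e_3}$ spanning it). Your opening appeal to the Lie functor is only heuristic over a general field $K$ of characteristic $0$, but you correctly recognize this and replace it with the direct infinitesimal computation, which is precisely the paper's argument.
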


In the next paragraphs we describe $\Der(\mg_{2,3})$ and $\Dera(\mg_{2,3})$ explicitly as subalgebras of $\mathfrak{gl}(5,K)$ putting emphasis on the actions and representations.
\smallskip

By canonical computations one verifies that an element in $\Der(\mg_{2,3})$ has the following  matricial representation in the basis $e_1,e_2,e_3,e_4,e_5$
 
$$\left( \begin{matrix}
 z_1+z_4 &-z_2 & 0 & 0 & 0 \\
-z_3 & z_1 -z_4& 0 & 0 & 0 \\
 z_6& z_5 &2z_1 & 0 & 0 \\
 z_7+z_9 & z_8 & z_5 & 3z_1+z_4 & -z_2\\
 z_{10}& z_7-z_9 & -z_6 & -z_3 & 3z_1-z_4
 \end{matrix}
 \right).
 $$

As usual, denote by $E_{ij}$ the 5$\times$5 matrix which has a 1 at the place $ij$ and $0$ in the other places. 

Consider the vector subspace of dimension three spanned by the matrices
$$
X = E_{32}+E_{43} \qquad Y= E_{31}- E_{53} \qquad Z=E_{41}+ E_{52}
$$
which obey the Lie bracket relation $[X,Y]=Z$, thus this is a faithful representation of the Heisenberg Lie algebra $\hh_1$.

Also denote by
$$U= E_{42}\qquad V=E_{41}-E_{52}\qquad W=E_{51} $$ a basis of 
the vector subspace which spans an abelian Lie algebra of dimension three.

The algebra $\mathfrak h_1 \times K^3$ is an ideal of the Lie algebra of derivations namely the radical. Denote  by $E,F,H, T$ the matrices  given by
$$\begin{array}{rclrcl}
E & = & -E_{12}-E_{45} & F & = & -E_{21}-E_{54} \\
H & = & E_{11}-E_{22} + E_{44}-E_{55} & T & = & E_{11}+E_{22}+2E_{33}+3E_{44}+2E_{55}.
\end{array}
$$

Thus $span \{E,F,H\}\simeq \mathfrak{sl}(2,K)$ and therefore $span \{E,F,H,T\}\simeq \mathfrak{gl}(2,K)=\mathfrak{sl}(2,K)\times K$.
The action of $\mathfrak{gl}(2,K)$ on $\mathfrak h_1\times K^3$ preserves each of the ideals $\mathfrak h_1$ and  $K^3$ respectively.

The action of $\mathfrak{sl}(2,K)$ on $\mathfrak h_1$ is  is given by  derivations, that is $t \in \mathfrak{sl}(2, K) \simeq \Der(\hh_1)$, in the basis $X,Y,Z$, is represented by
$$\left(
\begin{matrix}
h & e & 0\\
f &  -h &  0 \\
0 &  0  & 0
\end{matrix}
\right).
$$

The action of $\mathfrak{sl}(2,K)$ on $K^3$ is given by its irreducible representation in dimension three (see \cite{Hu})
$$
\ad(eE+fF+hH)|_{U,V,W} = \left( \begin{matrix}
2h &  2e &  0\\
f & 0 &  e\\
0 & 2f & -2h
\end{matrix}
\right).
$$

On the other hand the action of $T$ on $\mathfrak h_1$ is diagonal
$$ T \cdot X= X \qquad T \cdot Y = Y \qquad T \cdot Z = 2 Z;$$
and  the action of $T$ on $K^3$ is twice the identity: $T \cdot A= 2 A$ for all $A \in K^3$.

\subsection{The Lie algebra $\mg_{3,2}$} Let $e_1, e_2, e_3, e_4, e_5, e_6$ denote the basis of $\mg_{3,2}$ given in (\ref{b32}) above. A derivation of this Lie algebra has a matrix as follows
$$\left( \begin{matrix}
 z_1+z_{2} & z_4 &  z_6&0 & 0 & 0 \\
 z_{5} & z_1+z_{3} & z_{8}&0 & 0 & 0 \\
  z_{7}& z_9 &z_1 -z_{2}-z_3 &0 & 0 & 0 \\
 -z_{12} & -z_{16}+z_{17} & z_{18} & 2z_1+z_{2}+z_3& z_{8}&-z_6\\
 z_{14}+z_{15}& z_{10} & z_{16}+z_{17} & z_9 & 2z_1-z_3&z_4\\
 z_{13} &z_{14}-z_{15}&z_{11}-z_{12}&-z_{7}&z_{5}&2z_1-z_{2}
 \end{matrix}
 \right).
 $$
 Let $\mathfrak G$ and $\mathfrak R$ denote respectively the  sets of matrices in $\Der(\mg_{3,2})$
$$
\mathfrak G=\left\{ \left( \begin{matrix}
 z_1+z_{2} & z_4 &  z_6&0 & 0 & 0 \\
 z_{5} & z_1+z_{3} & z_{8}&0 & 0 & 0 \\
  z_{7}& z_9 &z_1 -z_{2}-z_3 &0 & 0 & 0 \\
    0    &      0 & 0    & 2z_1+z_{2}+z_3& z_{8}&-z_6\\
     0 &  0 &  0    & z_9 & 2z_1-z_3&z_4\\
    0 &   0  & 0&-z_{7}&z_{5}&2z_1-z_{2}
 \end{matrix}
\right) \right\}
$$
$$ \mathfrak R= \left\{ 
\left( \begin{matrix}
 0 &0    &0    &0 & 0 & 0 \\
 0 &  0    &  0   &0 & 0 & 0 \\
    0  & 0   & 0  &0 & 0 & 0 \\
 -z_{12} & -z_{16}+z_{17} & z_{18} & 0 & 0    &0   \\
 z_{14}+z_{15}& z_{10} & z_{16}+z_{17} & 0   & 0      &0  \\
 z_{13} &z_{14}-z_{15}&z_{11}-z_{12}& 0    & 0   &0     \end{matrix}
\right)\right\}.
$$

With the usual conventions, denote by  $E_{ij}$  the 6$\times$6 matrix which has a $1$ in the file $i$ and column $j$ and $0$ otherwise. 
Let $T$ and $f_i$, $i=1, \hdots , 8$, be the following  matrices

$$ T  =  E_{11}+ E_{22} +E_{33}+2E_{44}+2E_{55}+2E_{66}$$
$$\begin{array}{cc}
\begin{array}{rcl}
f_1 & = & E_{11}-E_{33}+E_{44}-E_{66}\\
f_2 & = & E_{22}-E_{33}+E_{44}-E_{55}\\
f_3 & = & E_{12} + E_{56}\\
f_4 & = & E_{21} + E_{65} 
\end{array}&
\begin{array}{rcl}
f_5 & = & E_{13} -E_{46}\\
f_6 & = & E_{31} - E_{64}\\
f_7 & = & E_{23} + E_{45}\\
f_8 & = & E_{32} + E_{54}
\end{array}
\end{array}
.$$

With the  Lie bracket of matrices, the vector space spanned by $f_1, \hdots, f_8$ constitute a Lie algebra isomorphic to $\mathfrak{sl}(3,K)$, such that  $[T,f_i]=0$ for all $i=1, \hdots , 8$. Hence one has the following isomorphism of Lie algebras

$$\mathfrak G \simeq \mathfrak{sl}(3,K) \times K = \mathfrak{gl}(3,K).$$

For $i=1, \hdots 9$, let $A_i$ denote the matrices 
$$\begin{array}{rclrclrcl}
A_1 & = & E_{52} & A_2 & = & E_{63} & A_3 & = & -E_{41}-E_{63}\\
A_4 & = & E_{61} & A_5 & = & E_{51}+E_{62} & A_6 & = & E_{51}-E_{62}\\
A_7 & = & -E_{42}+ E_{53} & A_8 & = & E_{42}+E_{53} & A_9 & = & E_{43}
\end{array}
$$
which generate the abelian Lie algebra $\mathfrak R$ of dimension nine. Actually this is a faithful representation of minimal dimension of the Lie algebra $K^9$ (see for instance  \cite{Bu}). Hence
$$\mathfrak R \simeq K^9$$
and it coincides with the radical of the Lie algebra of derivations of $\mg_{3,2}$.

The action of $\mathfrak{sl}(3,K)$ on $K^9$ is given by the adjoint representation $f_i \cdot A_j=[f_i, A_j]$ for all $i,j$, while the action of $T$ on $K^9$ is represented by the identity map.

\begin{prop} Let $\mg_{3,2}$ denote the free 2-step nilpotent Lie algebra in three generators. The Lie algebra of derivations of $\mg_{3,2}$ is isomorphic $\mathfrak{gl}(3,K) \ltimes K^9$.
\end{prop}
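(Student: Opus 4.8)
The plan is to exploit the canonical splitting $\mg_{3,2}=V\oplus W$, where $V=\operatorname{span}\{e_1,e_2,e_3\}$ and $W=C^1(\mg_{3,2})=\mathfrak z(\mg_{3,2})=\operatorname{span}\{e_4,e_5,e_6\}$, together with the observation that the bracket induces a linear isomorphism $\Lambda^2V\xrightarrow{\sim}W$, $e_i\wedge e_j\mapsto[e_i,e_j]$. Since every $t\in\Der(\mg_{3,2})$ preserves $W$ (being simultaneously the commutator ideal and the center), it has the block form $t=\left(\begin{smallmatrix}P&0\\Q&S\end{smallmatrix}\right)$ relative to $V\oplus W$, exactly as recorded in the explicit matrix above; here $P\in\mathfrak{gl}(V)$, $Q\in\operatorname{Hom}(V,W)$ and $S\in\mathfrak{gl}(W)$.

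First I would pin down the constraints imposed by the derivation identity. Applying $t[e_i,e_j]=[te_i,e_j]+[e_i,te_j]$ with $i,j\leq 3$ and using that $Qe_i,Qe_j\in W$ are central, one gets $S[e_i,e_j]=[Pe_i,e_j]+[e_i,Pe_j]$ for all $i,j$. Under the identification $W\cong\Lambda^2V$ this says precisely that $S$ is the natural extension of $P$ to $\Lambda^2V$, i.e. $S=\Lambda^2P$ (the infinitesimal second-exterior-power action $v\wedge w\mapsto Pv\wedge w+v\wedge Pw$); a direct check recovers the $S$-block displayed in terms of the entries of $P$. Brackets involving $W$ impose no further condition, because $W$ is central. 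Consequently $P$ and $Q$ are free while $S=\Lambda^2P$ is determined, so $\Der(\mg_{3,2})$ is exactly the vector-space direct sum $\mathfrak G\oplus\mathfrak R$, with $\mathfrak G$ the derivations having $Q=0$ and $\mathfrak R$ those having $P=S=0$.

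Next I would identify the two summands as Lie algebras and establish the semidirect structure. The assignment $P\mapsto(P,\Lambda^2P,0)$ is a Lie-algebra homomorphism (the second exterior power of a representation is again a representation), so $\mathfrak G$ is a subalgebra isomorphic to $\mathfrak{gl}(3,K)$; this is the identification already made via the matrices $f_1,\dots,f_8,T$. On the other hand $\mathfrak R\cong\operatorname{Hom}(V,W)\cong K^9$ is abelian, since for $r,r'\in\mathfrak R$ the composites $rr'$ and $r'r$ vanish (the image of $r'$ lies in $W=\ker r$ and conversely). It remains to check that $\mathfrak R$ is an ideal: for block-diagonal $g=(P,\Lambda^2P)$ and strictly lower-triangular $r=Q$, the commutator $[g,r]$ has $(\Lambda^2P)\,Q-QP$ in the lower-left corner and zeros elsewhere, hence lies in $\mathfrak R$. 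Since also $\mathfrak G\cap\mathfrak R=\{0\}$, the decomposition $\Der(\mg_{3,2})=\mathfrak G\ltimes\mathfrak R$ follows, giving $\mathfrak{gl}(3,K)\ltimes K^9$ with the action $P\cdot Q=(\Lambda^2P)Q-QP$ --- equivalently the commutator (``adjoint'') action on the matrices $A_j$ described above.

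The computations are all routine once the module picture is in place; the only point requiring a little care is the claim $S=\Lambda^2P$, i.e. that the derivation identity forces the action on the center to be the induced second-exterior-power action with no extra freedom. This is where the surjectivity of the bracket $\Lambda^2V\to W$ is essential: it guarantees that $S$ is completely determined by $P$ rather than merely constrained, which is what collapses the potential $\mathfrak{gl}(W)$-worth of freedom in the $S$-block down to the single copy of $\mathfrak{gl}(3,K)$.
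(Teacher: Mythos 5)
Your proof is correct, and it reaches the paper's conclusion by a more conceptual route than the paper itself. The paper proceeds by direct computation: it displays the general $6\times 6$ matrix of a derivation in the Hall basis (``by canonical computations''), names the block-diagonal part $\mathfrak G$ and the strictly lower-triangular part $\mathfrak R$, and identifies $\mathfrak G\simeq\mathfrak{gl}(3,K)$ and $\mathfrak R\simeq K^9$ by exhibiting explicit spanning matrices $T,f_1,\dots,f_8$ and $A_1,\dots,A_9$ and computing their brackets, with the action of $\mathfrak{sl}(3,K)$ on $K^9$ read off as $f_i\cdot A_j=[f_i,A_j]$. You arrive at the same decomposition $\Der(\mg_{3,2})=\mathfrak G\ltimes\mathfrak R$, but you replace the entrywise work with the module-theoretic observation that freeness in step two makes the bracket an isomorphism $\Lambda^2V\to W=C^1(\mg_{3,2})$, so the derivation identity forces $S=\Lambda^2P$ while leaving $Q\in\operatorname{Hom}(V,W)$ unconstrained; this \emph{explains}, rather than merely records, why the lower-right block of the paper's matrix is determined by the upper-left one and why exactly $9+9=18$ free parameters appear (matching the paper's $z_1,\dots,z_{18}$, including the trace relation $\operatorname{tr}(\Lambda^2P)=2\operatorname{tr}P$ visible in the diagonal entries). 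Your verifications are complete where they need to be: brackets involving the center impose no conditions, so every pair $(P,Q)$ with $S=\Lambda^2P$ really is a derivation; $P\mapsto(P,\Lambda^2P)$ is a homomorphism since $\Lambda^2$ is the derivative of the group action on the exterior square; and the ideal computation $[g,r]=(\Lambda^2P)Q-QP$ agrees with the paper's adjoint-action description. What your approach buys: it is basis-free and generalizes immediately to $\Der(\mg_{m,2})\simeq\mathfrak{gl}(m,K)\ltimes\operatorname{Hom}(K^m,\Lambda^2K^m)$ for arbitrary $m$. What the paper's explicitness buys: the concrete matrices $f_i$ and $A_j$ are reused in the subsequent corollary to isolate the skew-symmetric derivations relative to $B_{3,2}$, a step for which your abstract description would still need to descend to coordinates.
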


For  the ad-invariant metric $\la \,,\,\ra$ on $(\mg_{3,2}, B_{3,2})$ as in (\ref{m3223}) one has the next result.

\begin{cor} The set of skew-symmetric derivations of $\mg_{3,2}$ with the  metric $\la \,,\,\ra$ is given by the Lie algebra 
$$\Dera(\mg_{3,2})\simeq \mathfrak{sl}(3,K)\ltimes K^3,$$
while the set of inner derivations is isomorphic to $K^3$. 
\end{cor}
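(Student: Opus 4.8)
The plan is to translate the skew-symmetry condition into a single matrix identity and then read off the surviving directions from the block decomposition already available. Writing $B=B_{3,2}$ for the Gram matrix (\ref{m3223}) of the metric and $D$ for the matrix of a derivation in the basis $e_1,\dots,e_6$, condition (\ref{antisim}) is equivalent to $D^{t}B+BD=0$, that is, to $BD$ being antisymmetric. Split $\mg_{3,2}=V\oplus\mz(\mg_{3,2})$ with $V=\mathrm{span}\{e_1,e_2,e_3\}$ and $\mz(\mg_{3,2})=C^{1}(\mg_{3,2})=\mathrm{span}\{e_4,e_5,e_6\}$. In these coordinates every derivation is block lower triangular, $D=\left(\begin{smallmatrix}A&0\\ C&B'\end{smallmatrix}\right)$, where $A\in\mathfrak{gl}(3,K)$ is the induced map on $V$, $C\colon V\to\mz(\mg_{3,2})$ is the shift, and $B'=\Lambda^{2}A$ is forced by the Leibniz rule; meanwhile $B=\left(\begin{smallmatrix}0&J\\ J&0\end{smallmatrix}\right)$ with $J=\left(\begin{smallmatrix}0&0&1\\ 0&-1&0\\ 1&0&0\end{smallmatrix}\right)=J^{t}$, so the metric identifies $\mz(\mg_{3,2})=\Lambda^{2}V$ with $V^{*}$.

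Performing the block multiplication, $D^{t}B+BD=0$ breaks into the upper diagonal block $C^{t}J+JC=0$ (the lower one vanishing identically) and the two off-diagonal blocks, which are mutually transpose and read $A^{t}J+JB'=0$, i.e. $B'=-J^{-1}A^{t}J$. Since the Leibniz rule already fixes $B'=\Lambda^{2}A$, and under the identification $\mz(\mg_{3,2})\cong V^{*}$ one has $\Lambda^{2}A=J^{-1}(\operatorname{tr}(A)I-A^{t})J$, the two expressions for $B'$ agree exactly when $\operatorname{tr}(A)=0$. Hence the semisimple part of a skew derivation is precisely $\mathfrak{sl}(3,K)$; concretely, among the generators $T,f_{1},\dots,f_{8}$ of $\mathfrak{gl}(3,K)$ only the trace direction $T$ fails (\ref{antisim}). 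The remaining relation $C^{t}J+JC=0$ says $JC$ is antisymmetric, cutting the nine-dimensional space of shifts down to the three-dimensional $\mathfrak{sl}(3,K)$-invariant subspace $\Lambda^{2}V^{*}\subset V^{*}\otimes V^{*}$ (on which $\mathfrak{sl}(3,K)$ acts by its standard representation, via $\Lambda^{2}V^{*}\cong V$). As the shifts form an abelian ideal, this yields $\Dera(\mg_{3,2})\simeq\mathfrak{sl}(3,K)\ltimes K^{3}$.

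For the inner derivations, note first that ad-invariance (\ref{adme}) is exactly the assertion that every $\ad(x)$ satisfies (\ref{antisim}), so $\Inn(\mg_{3,2})\subseteq\Dera(\mg_{3,2})$. Because $\mg_{3,2}$ is $2$-step nilpotent with $\mz(\mg_{3,2})=C^{1}(\mg_{3,2})$, each $\ad(x)$ maps $\mg_{3,2}$ into $\mz(\mg_{3,2})$ and annihilates it, so $\Inn(\mg_{3,2})$ sits inside the shift block; since $\ker\ad=\mz(\mg_{3,2})$ we get $\Inn(\mg_{3,2})\cong\mg_{3,2}/\mz(\mg_{3,2})\cong K^{3}$. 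A direct computation shows that $\ad(e_1),\ad(e_2),\ad(e_3)$ span precisely the skew shift space isolated above, so the inner derivations coincide with the abelian ideal $K^{3}$ of $\Dera(\mg_{3,2})$. The one step needing genuine care is the compatibility $\Lambda^{2}A=J^{-1}(\operatorname{tr}(A)I-A^{t})J$ between the Leibniz-forced action on the center and the metric duality $\mz(\mg_{3,2})\cong V^{*}$; once this is in hand, forcing $\operatorname{tr}(A)=0$, everything else is bookkeeping on $3\times3$ blocks.
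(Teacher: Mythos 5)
Your proposal is correct, and it takes a genuinely different route from the paper's. The paper proves this corollary by direct verification: having already presented the general derivation of $\mg_{3,2}$ as an explicit $6\times 6$ matrix in eighteen parameters, it simply checks that condition (\ref{antisim}) holds if and only if $t$ lies in the span of $\{f_1,\dots,f_8,\,A_2+\tfrac12 A_3,\,A_5,\,A_8\}$, and then reads the structure $\mathfrak{sl}(3,K)\ltimes K^3$ off the brackets computed earlier in the section; the identification of the abelian ideal with the inner derivations is asserted rather than argued (concretely, $\ad(e_1)=A_8$, $\ad(e_2)=2A_2+A_3$, $\ad(e_3)=-A_5$). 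You instead package the whole computation conceptually: the blocks of $BD$ give exactly your conditions $C^tJ+JC=0$ and $B'=-J^{-1}A^tJ$, and the one identity needing care, $\Lambda^2A=J^{-1}(\operatorname{tr}(A)I-A^t)J$, does hold with the Hall basis ordering $e_4=e_1\wedge e_2$, $e_5=e_1\wedge e_3$, $e_6=e_2\wedge e_3$ (for instance $A=E_{12}$ gives $\Lambda^2A=E_{23}=-JA^tJ$, and $A=E_{11}$ gives $\mathrm{diag}(1,1,0)=J(I-E_{11})J$; note $J^2=I$, so $J^{-1}=J$), whence $\operatorname{tr}(A)=0$ drops out as the precise discrepancy between the Leibniz-forced action on the center and the metric adjoint. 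What your approach buys is explanation rather than verification: it shows \emph{why} the trace direction $T$ is the only obstruction, identifies the abelian ideal intrinsically as $\Lambda^2V^*\cong V$ with the standard $\mathfrak{sl}(3,K)$-action (which the paper only states), and makes the inner-derivation claim self-contained via $\Inn(\mg_{3,2})\cong\mg_{3,2}/\mz(\mg_{3,2})$ — indeed, since ad-invariance places $\Inn(\mg_{3,2})$ inside your $3$-dimensional skew shift space, equality already follows from dimensions and your final ``direct computation'' is superfluous. What the paper's approach buys is consistency with the explicit generators $f_i$, $A_j$ used throughout Section 4 and an answer in closed coordinate form; yours is basis-light and would transfer to other $2$-step algebras with $\mz=C^1$ and a neutral metric pairing the generators with the center.
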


\begin{proof}
One can easily check that $t\in \Dera(\mg_{3,2})$ is skew-symmetric if and only if it belongs to the vector space spanned by
$$\{f_i, i=1, \hdots 8,A_2 +\frac{1}{2} A_3, A_5, A_8\}$$ 

The elements $f_i$ generate a Lie algebra isomorphic to $\mathfrak{sl}(3,K)$ and $A_2 +\frac{1}{2} A_3, A_5, A_8$ span an abelian ideal, isomorphic to $K^3$.
Furthermore $t \in \mathfrak{sl}(3)$ acts on $K^3$ as a linear transformation of $K^3$ (that is $s(x)$ for $x\in K^3$).
\end{proof}

\vskip 5pt

{\bf Acknoledgements.}   The authors are very grateful to A. Kaplan for useful suggestions and comments. 

They also specially thank to an anonymous referee whose  suggestions helped to improve the results in the paper.


\begin{thebibliography}{GGGG}

\bibitem{Ba} Y. A. Bahturin,  Identical rlations in Lie algebras, VNU Science Press (1987).

\bibitem{BM} G. Bor, R. Montgomery, $G_2$ and the rolling distribution, available http://count.ucsc.edu/$\sim$rmont/papers/R9A.pdf

\bibitem{Bu} D. Burde, A refinement of Ado's Theorem. Archiv Math. 70 (1998), 118--127.

\bibitem{Ca} E. Cartan, Les syst\`emes de Pfaff \`a cinque variables et les \'equations aux d\'eriv\'ees partielles du
second ordre, Ann. Sci. \'Ecole Normale, {\bf 27}, no. 3 (1910),  109–-192. (Reprinted in Oeuvres
completes, Partie III, vol. {\bf 2} 137--288.)

\bibitem{Co} V. Cortes, Handbook of Pseudo-Riemannian Geometry and Supersymmetry, in: IRMA Lectures in Mathematics and Theoretical Physics,  {\bf 16} (2010).

\bibitem{DF} V. Drensky, S. Findik, Inner and outer automorphisms of free metabelian nilpotent Lie algebras, arXiv:1003:0350. 

\bibitem{DG} V. Drensky, C. K. Gupta, Automorphisms of free nilpotent Lie algebras, Can. J. Math. {\bf XLII} 2 (1990), 259--279.

\bibitem{FS} G. Favre, L. Santharoubane, Symmetric, invariant, non-degenerate bilinear form on a Lie algebra, J.  Algebra, {\bf 105}  (1987), 451--464.

\bibitem{GG} M. Grayson, R. Grossman, Models for free nilpotent Lie algebras, J. Algebra {\bf  35} (1990),  177--191. See draft in  http://users.lac.uic.edu/$\tilde{}$grossman/trees.htm
  
\bibitem{Ha} M. Hall, A basis for free Lie rings and higher commutators in free
groups, Proc. Amer. Math. Soc. {\bf 1} (1950), 575–-581.

\bibitem{Hu} J. Humphreys, Introduction to Lie Algebras and Representation Theory, Springer Verlag (1972).

\bibitem{KO} I. Kath, M. Olbricht, Metric Lie algebras with maximal isotropic centre, 
Math. Z. {\bf 246} 1-2  (2004), 23--53.

\bibitem{Ka} I. Kath, Nilpotent metric Lie algebras of small dimension,  
J. Lie Theory {\bf 17} (1) (2007), 41--61. 

\bibitem{Kn} A. Knapp, Basic algebra, Birkh\"auser (Boston-Basel-Berlin) (2006).

\bibitem{Kos} B. Kostant,  On differential geometry and homogeneous spaces II, Proc. N.A.S. {\bf 42} (1956), 354–-357.

\bibitem{MR} A. Medina, P. Revoy, Alg\`ebres de Lie et produit scalaire invariant (French) [Lie algebras and invariant scalar products], Ann. Sci. \'Ecole Norm. Sup. (4) {\bf 18} (3) (1985), 553–-561.

\bibitem{Me} A. Medina,  Groupes de Lie munis de m\'etriques bi-invariantes, Tohoku Math. J., {\bf 37} (1985), 405–-421.

\bibitem{MA} F. Monroy-P\'erez, F., A. Anzaldo-Meneses, The step-2 nilpotent $(n,n(n+1)/2)$ sub-Riemannian geometry,  J. Dyn. Control Syst. {\bf 12} (2) (2006), 185--216.

\bibitem{Mo} R. Montgomery,  A tour of Subriemannian geometries, their geodesics and applications, Math. surveys and monographs, {\bf 91} Amer. Math. Soc. (2002).

\bibitem{My} O. Myasnichenko, Nilpotent $(3,6)$ sub-Riemannian problem, J. Dyn. Control Syst. {\bf 8} (4)  (2002), 573--597.
                          
\bibitem{Ov1} G. P. Ovando, Naturally reductive pseudo-Riemannian spaces,  J. of Geom. and Physics, {\bf 61} (1) (2010), 157--171. 

\bibitem{Ov2} G. P. Ovando, Two-step nilpotent Lie algebras with ad-invariant metrics and a special kind of skew-symmetric maps,  J. Algebra and its Appl., {\bf 6} (6) (2007), 897--917.

\bibitem{Se} J. P. Serre, Lie algebras and Lie groups, Lecture Notes in Math. {\bf 1500}, Springer Verlag (1992).

\end{thebibliography}
\end{document}